\newcommand{\Rmnum}[1]{\expandafter\@slowromancap\romannumeral #1@}
\numberwithin{equation}{section}
\newtheorem{theorem}{Theorem}[section] 
\newtheorem{lemma}[theorem]{Lemma} 
\newtheorem{corollary}[theorem]{Corollary}
\newtheorem{remark}[theorem]{Remark}  
\begin{document}

\title{\uppercase{gradient estimates and parabolic frequency monotonicity for positive solutions of the heat equation under generalized Ricci flow }}

\author{
  Juanling Lu
    \thanks{School of Mathematical Sciences, East China Normal University, 500 Dongchuan Road, Shanghai 200241, P. R. of China, E-mail address:
    lujuanlingmath@163.com}
  \and
  Yu Zheng
    \thanks{School of Mathematical Sciences, East China Normal University, 500 Dongchuan Road, Shanghai 200241, P. R. of China, E-mail address: zhyu@math.ecnu.edu.cn.}
   \footnote{This work was supported by the National Natural Science Foundation of China (No.~12271163), the Science and Technology Commission of Shanghai Municipality (No.~22DZ2229014), Key Laboratory of MEA (Ministry of Education) and Shanghai Key Laboratory of PMMP(No.~18DZ2271000).}
}

\date{}

\maketitle

\begin{abstract}
In this paper, we establish Li-Yau-type and Hamilton-type estimates for positive solutions to the heat equation associated with the generalized Ricci flow, under a less stringent curvature condition. Compared with \cite{liu shiping} and \cite{Zhang-Hamilton estimate}, these estimates generalize the results in Ricci flow to this new flow under the weaker Ricci curvature bounded assumption. As an application, we derive the Harnack-type inequalities in spacetime and find the monotonicity of one parabolic frequency for positive solutions of the heat equation under bounded Ricci curvature.
\end{abstract}

\section{Introduction}
\subsection{Gradient estimates under the generalized Ricci flow}
In their seminal paper \cite{Peter Li and Yau}, P. Li and S.-T. Yau developed fundamental gradient estimates for positive solutions to the heat equation on Riemannian manifolds. In particular, they demonstrated that if $u: \mathrm{\bf{M}}^{n} \times[0,\infty) \rightarrow \mathbb{R}$ is a positive solution to the heat equation $\partial_{t}u=\Delta u$ on an n-dimensional complete Riemannian manifold $(\mathrm{\bf{M}}^{n}, g)$ with nonnegative Ricci curvature, then it satisfies the following estimate
$$\frac{\partial_{t}u}{u}-\frac{|\nabla u|^{2}}{u^{2}}+\frac{n}{2t} \geq \Delta\ln u +\frac{n}{2t} \geq 0$$
for all $(x,t) \in \mathrm{\bf{M}}\times(0,\infty)$. Remarkably, the Li-Yau estimate is also referred to as a differential Harnack inequality as integrating it yields a sharp version of the classical Harnack inequality originally formulated from Moser \cite{Moser 1964}.

In \cite{Hamilton 1993}, when $(\mathrm{\bf{M}}^{n},g)$ is a closed n-dimensional manifold with Ricci curvature bounded below
by $\mathrm{Ric} \geq -Kg$ for some constant $K \geq 0$, R. S. Hamilton showed that positive solutions $u=u(x,t)$ to the heat equation, which satisfy the condition $u(x,t) \leq A$, adhere to the following gradient estimate
$$\frac{|\nabla u|^{2}}{u^{2}} \leq \left(\frac{1}{t}+2K\right)\ln\left(\frac{A}{u}\right).$$

The aforementioned estimates are of significant importance, and numerous scholars have conducted research on this topic. When metrics evolve under the Ricci flow, the Li-Yau-type gradient estimate for positive solutions of the heat equation has been established, as documented in \cite{cao xiaodong,liu shiping} among others. In 2015, B\u{a}ile\c{s}teanu \cite{Ricci Harmonic Flow} examined the Li-Yau estimate for positive solutions of the heat equation under the Ricci-harmonic flow, given the appropriate conditions. Recently, Li, Li, and Xu \cite{Li Yi-G2 flow} proved that the Li-Yau-type gradient estimate also holds when the metric evolves via the Laplacian $G_\text{2}$ flow on a closed 7-dimensional manifold with a closed $G_2$-structure. Additionally, there is a wealth of research on Hamilton-type gradient estimates for the heat equation under various geometric flows, as evidenced by \cite{Zhang-Hamilton estimate, Li Yi Ricci and Ricci harmonic flow, Li Yi-G2 flow} and others.

In this paper, we study the gradient estimates of positive solutions to the heat equation under the generalized Ricci flow. There have been many scholars who have done extensive research on this flow in different aspects, for details, see \cite{Streets-book, entropy functional-Japan, Streets-Bochners formula, Li Xilun, Streets-scalar and entropy} and so on.

This flow is described by the following equations
\begin{equation}\label{definition of generalized Ricii flow}
\begin{cases}
\partial_{t}g=-2\mathrm{Ric}+\dfrac{1}{2}H^{2}, \\
\partial_{t}H=-dd^{\ast}_{g}H,
\end{cases}
\end{equation}
where $H$ is a closed three-form on the manifold $(\mathrm{\bf{M}}^{n},g(t))$, $H^{2}$ is positive semidefinite tensor defined by
$$H^{2}(X,Y) =\langle i_{X}H,i_{Y}H \rangle_{g(t)},$$ with $i_{X}$ denoting the interior product and the inner product being taken with respect to the the time-dependent metric $g(t)$, $d^{\ast}_{g}$ represents the adjoint of the exterior differential $d$ acting on differential forms with respect to the metric $g(t)$. This equation arises independently across various fields, including mathematical physics \cite{Oliynyk-math physics}, complex geometry \cite{Tian Gang-complex geometry,Tian Gang and Streets-complex geometry}, and generalized geometry \cite{Garcia-generalized geometry,Tian Gang-generalized geometry,Streets-generalized geometry}. For additional background, we refer the reader to \cite{Streets-book}. It is noteworthy that the condition $H\equiv0$ is preserved by the flow (see \cite{Streets-book}, Proposition 4.20). In this case, the metric evolves according to the Ricci flow. Consequently, the remainder of this paper primarily focuses on results pertaining to the generalized Ricci flow, with the corresponding results for the classical Ricci flow emerging as a special case.

Here, we first extend the Li-Yau gradient estimate for the heat equation, given by
\begin{equation}\label{definition of heat eq.}
\begin{aligned}
\partial_{t}u=\Delta_{g(t)}u,
\end{aligned}
\end{equation}
to the case of the generalized Ricci flow described by (\ref{definition of generalized Ricii flow}), where $\Delta_{g(t)}=\mathrm{tr}_{g(t)}\left(\nabla_{g(t)}^{2}\right)$ is the usual Laplacian induced by $g(t)$.

\begin{theorem}\label{gradient estimates 1-1}
Let $(\mathrm{\bf{M}}^{n},g(t),H(t)),t\in[0,T]$, be the solution of the generalized Ricci flow (\ref{definition of generalized Ricii flow}) on an $n$-dimensional closed manifold $\mathrm{\bf{M}}$ with $-\dfrac{K_1}{t}g \leq \mathrm{Ric} \leq \dfrac{K_2}{t}g$ and $H^2\leq\dfrac{K_{3}}{t}g$, where $K_{1}$, $K_{2}$, $K_{3}$ and $T<\infty$ are positive constants. Assume that $|\nabla H^{2}| \leq K_{4}$ for some constant $K_{4}>0$. Suppose that $u:\mathrm{\bf{M}}\times[0,T]\rightarrow\mathbb{R}$ is a smooth positive solution of the heat equation (\ref{definition of heat eq.}), we then derive the following
\begin{equation*}
\dfrac{|\nabla u|^{2}}{u^{2}}-\alpha\dfrac{\partial_{t}u}{u}
\leq \sqrt{\frac{n\alpha}{2a}}\left[\left(\sqrt{\frac{n\alpha}{2a}}+\sqrt{B_{2}}\right)\frac{1}{t}+\frac{\sqrt{B_{3}}}{\sqrt{t}}+\sqrt{B_{1}}\right]
\end{equation*}
for any $\alpha>1$ and $a>0, ~b>0$ with $a+2b=\dfrac{1}{\alpha}$, where
$B_1 = \dfrac{n\alpha^3}{512a(\alpha-1)^2} + \dfrac{3n\alpha K_{4}^{2}}{8} $,
$B_2 =
\dfrac{n\alpha^3}{2a(\alpha-1)^2}\left(K_{1}+\dfrac{(\alpha-1)K_{3}}{4\alpha}\right)^{2} + \dfrac{n\alpha K}{2b} + \dfrac{n\alpha K_3^2}{32b}$,
$B_{3}=\dfrac{n\alpha^3}{16a(\alpha-1)^2} \left( K_1 + \dfrac{(\alpha-1)K_3}{4\alpha} \right)$,
$K=\mathrm{max}\{K_{1}^{2},K_{2}^{2}\}$.
\end{theorem}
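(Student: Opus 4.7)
The plan is to follow the maximum principle strategy of Li and Yau, suitably adapted to the generalized Ricci flow. Set $f = \ln u$, so that the heat equation becomes $\partial_{t}f = \Delta_{g(t)} f + |\nabla f|^{2}$, and consider the auxiliary function
\begin{equation*}
F \;=\; t\bigl(|\nabla f|^{2} - \alpha\,\partial_{t} f\bigr)
\end{equation*}
on $\mathrm{\bf{M}}\times[0,T]$. Since $F\equiv 0$ at $t=0$ and $\mathrm{\bf{M}}$ is closed, any positive supremum of $F$ must be attained at an interior time $t_{0}>0$, where $\partial_{t}F\geq 0$, $\Delta F\leq 0$, and $\nabla F = 0$.

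The first main step is to compute $(\partial_{t}-\Delta_{g(t)})F$. Because the metric evolves, I would use $\partial_{t}g^{ij} = 2R^{ij} - \tfrac{1}{2}(H^{2})^{ij}$ to obtain
\begin{equation*}
\partial_{t}|\nabla f|^{2} \;=\; 2\,\mathrm{Ric}(\nabla f,\nabla f) \;-\; \tfrac{1}{2}H^{2}(\nabla f,\nabla f) \;+\; 2\langle\nabla f,\nabla\partial_{t}f\rangle .
\end{equation*}
Combining this with the Bochner formula, the $\mathrm{Ric}(\nabla f,\nabla f)$ contributions cancel cleanly, leaving $-2|\nabla^{2}f|^{2}$ as the leading good term. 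A parallel computation for $(\partial_{t}-\Delta)\partial_{t}f$ produces extra terms coming from the commutator $[\partial_{t},\Delta]$, which under \eqref{definition of generalized Ricii flow} contribute $\mathrm{Ric}$, $H^{2}$, and $\nabla H^{2}$ pieces; this is precisely where the hypothesis $|\nabla H^{2}|\leq K_{4}$ enters and explains the appearance of $K_{4}$ inside $B_{1}$.

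The second step is the quadratic estimate at the maximum point $(x_{0},t_{0})$. Using $|\nabla^{2}f|^{2}\geq\tfrac{1}{n}(\Delta f)^{2}$, I rewrite $\Delta f = \partial_{t}f - |\nabla f|^{2}$ so that $(\Delta f)^{2}$ is expressed in terms of $F/t$ and $|\nabla f|^{2}$. The parameters $a$ and $b$ (with $a+2b=1/\alpha$) enter as weights in Young's inequality applied to the three cross couplings that remain after $\nabla F(x_{0})=0$ eliminates $\langle\nabla f,\nabla F\rangle$: namely, the term $\langle\nabla f,\nabla H^{2}\rangle$ is absorbed using the weight $a$ against the good term $\tfrac{2t}{n\alpha}(\Delta f)^{2}$, while the pointwise bounds $H^{2}\leq K_{3}/t$ and $\mathrm{Ric}\in[-K_{1}/t,K_{2}/t]g$ (used in both the evolution equations and in the Ricci-coupling term $\mathrm{Ric}(\nabla f,\nabla f)$ multiplying $\alpha$) are absorbed with weight $b$. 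The combination $K_{1}+\tfrac{(\alpha-1)K_{3}}{4\alpha}$ that appears in $B_{2}$ and $B_{3}$ arises from lining up the Ricci lower bound with precisely the fraction of $H^{2}(\nabla f,\nabla f)$ that survives after the $(1-\tfrac{1}{\alpha})$-splitting between $|\nabla f|^{2}$ and $\partial_{t}f$.

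Once these absorptions are performed, the inequality $(\partial_{t}-\Delta)F\geq 0$ at $(x_{0},t_{0})$ reduces to a quadratic inequality of schematic form
\begin{equation*}
\frac{2a}{n\alpha}\Bigl(\frac{F}{t}\Bigr)^{2} \;\leq\; \frac{F}{t^{2}} \;+\; \frac{B_{2}}{t^{2}} \;+\; \frac{B_{3}}{t} \;+\; B_{1},
\end{equation*}
and applying the quadratic formula together with $\sqrt{x+y}\leq\sqrt{x}+\sqrt{y}$ yields the claimed bound for $F(x_{0},t_{0})/t_{0}$, hence for $|\nabla u|^{2}/u^{2}-\alpha\,\partial_{t}u/u$ everywhere. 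The principal obstacle I anticipate is not conceptual but computational: tracking every occurrence of $K_{1},K_{2},K_{3},K_{4}$ through the Young-inequality splits to produce exactly the constants $B_{1},B_{2},B_{3}$ as stated, and in particular choosing the splitting ratios so that all cross terms of the form $\langle\nabla f,\cdot\rangle$ are absorbed by the single $\tfrac{2t}{n\alpha}(\Delta f)^{2}$ reservoir controlled by $a$.
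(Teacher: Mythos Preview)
Your overall architecture---set $F=t(|\nabla f|^{2}-\alpha\,\partial_{t}f)$, compute $(\Delta-\partial_{t})F$, apply the maximum principle at an interior maximum, and solve the resulting quadratic in $F$---is exactly the paper's approach, and your schematic quadratic inequality matches the one obtained there.

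Where your plan goes off track is in the role of $a,b$ and in the absorption mechanism for the $|\nabla f|^{2}$ errors. In the paper, the constraint $a+2b=1/\alpha$ is used to split the \emph{Hessian} term, writing $2|\nabla^{2}f|^{2}=2\alpha(a+2b)|\nabla^{2}f|^{2}$: the $a\alpha$-portion is kept to produce the good $\tfrac{2a\alpha}{n}(\Delta f)^{2}$ term (not $\tfrac{2}{n\alpha}(\Delta f)^{2}$ as you wrote), and each $b\alpha$-portion completes the square with $\alpha\langle\mathrm{Ric},\nabla^{2}f\rangle$ and $-\tfrac{\alpha}{4}\langle H^{2},\nabla^{2}f\rangle$ respectively, which is where the $1/b$ factors in $B_{2}$ come from. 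The gradient--curvature terms $\mathrm{Ric}(\nabla f,\nabla f)$, $H^{2}(\nabla f,\nabla f)$, and $\langle\nabla f,\tfrac{1}{4}\nabla|H|^{2}-\tfrac{1}{2}\mathrm{div}H^{2}\rangle$ are bounded by the hypotheses and Cauchy--Schwarz to produce errors of the form $C|\nabla f|^{2}$; contrary to your description, these are \emph{not} absorbed against $(\Delta f)^{2}$ but against the $\bigl(\tfrac{\alpha-1}{\alpha}\bigr)^{2}|\nabla f|^{4}$ term that appears after the algebraic expansion
\[
(|\nabla f|^{2}-\partial_{t}f)^{2}=\tfrac{1}{\alpha^{2}}(|\nabla f|^{2}-\alpha\partial_{t}f)^{2}+\bigl(\tfrac{\alpha-1}{\alpha}\bigr)^{2}|\nabla f|^{4}+\tfrac{2(\alpha-1)}{\alpha^{2}}|\nabla f|^{2}(|\nabla f|^{2}-\alpha\partial_{t}f).
\]
This identity (and the subsequent bound $c_{1}x^{2}-c_{2}x\geq -c_{2}^{2}/4c_{1}$ applied with $x=|\nabla f|^{2}$) is the step that forces $\alpha>1$ and generates the $(\alpha-1)^{-2}$ denominators in $B_{1},B_{2},B_{3}$. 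Your sketch gestures at rewriting $(\Delta f)^{2}$ in terms of $F/t$ and $|\nabla f|^{2}$ but does not isolate this $|\nabla f|^{4}$ absorption; without it, the $|\nabla f|^{2}$ errors have no sink and the quadratic inequality will not close with the stated constants.
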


\begin{remark}
It should be noted that the condition on $\alpha >1$ comes from the key estimate (\ref{the key range of a}) in the proof of the Theorem \ref{gradient estimates 1-1}. For $\alpha \leq 1$, it is still an open problem.
\end{remark}

\begin{remark}
Since $|\nabla H^{2}|$ is bounded on a closed manifold, $K_{4}$ exists and is finite. This shows that the assumption condition $|\nabla H^{2}| \leq K_{4}$ is natural.
\end{remark}

\begin{remark}
When the generalized Ricci flow satisfies $H(\cdot,0) = 0$, which indicates that the metric $g(t)$ evolves according to the Ricci flow (see \cite[Proposition 4.20]{Streets-book}), we can obtain the following estimate from Theorem \ref{gradient estimates 1-1}:
\begin{equation}\label{estimate of Ricci Flow}
\begin{aligned}
\dfrac{|\nabla u|^{2}}{u^{2}}-\alpha\dfrac{\partial_{t}u}{u}
\leq \dfrac{n\alpha}{2at} + \frac{1}{t}\sqrt{\frac{n^{2}\alpha^{4}}{4a^{2}(\alpha-1)^{2}}+\frac{n^{2}\alpha^{2}K}{4ab}}
\end{aligned}
\end{equation}
for any $\alpha>1$ and $a,b>0$ with $a+2b=\dfrac{1}{\alpha}$, where $K=\mathrm{max}\{K_{1}^{2},K_{2}^{2}\}$. It is evident that our estimate (\ref{estimate of Ricci Flow}) represents an improvement over Theorem 2 in \cite{liu shiping}, as we have weakened the condition $-K_{1}g \leq \mathrm{Ric} \leq K_{2}g$ to $-\dfrac{K_1}{t}g \leq \mathrm{Ric} \leq \dfrac{K_2}{t}g$.
\end{remark}

According to Theorem \ref{gradient estimates 1-1}, we obtain the following Harnack inequality.

\begin{corollary}\label{Harnack}
Under the same hypotheses as Theorem \ref{gradient estimates 1-1}, then the following holds
\begin{equation*}
u(x,t_{1}) \leq u(y,t_{2})\left(\frac{t_{2}}{t_{1}}\right)^{\frac{\xi}{\alpha}}
exp\left\{\int_{0}^{1} \frac{\alpha|\gamma'(s)|^{2}}{4(t_{2}-t_{1})} ds
+\frac{2}{\alpha}\sqrt{\frac{n\alpha}{2a}B_{3}}\left(\sqrt{t_{2}}-\sqrt{t_{1}}\right)
+\frac{t_{2}-t_{1}}{\alpha}\sqrt{\frac{n\alpha}{2a}B_{1}}\right\}
\end{equation*}
for any $x,y \in \mathrm{\bf{M}}$ and $0<t_{1}<t_{2} \leq T$, where $\alpha>1$, $\xi=\dfrac{n\alpha}{2a}+\sqrt{\dfrac{n\alpha}{2a}B_{2}}~$, $\gamma$ is the minimal geodesic connecting $x$ and $y$, and the constants $a,B_{1},B_{2},B_{3}$ are as shown in Theorem \ref{gradient estimates 1-1}.
\end{corollary}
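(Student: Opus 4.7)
The plan is to derive the spacetime Harnack inequality by integrating the Li-Yau type estimate of Theorem~\ref{gradient estimates 1-1} along a curve joining $(y,t_2)$ to $(x,t_1)$. The first step is to rewrite the conclusion of Theorem~\ref{gradient estimates 1-1} as a lower bound on the time derivative of $\ln u$:
\begin{equation*}
\partial_t \ln u \;\geq\; \frac{1}{\alpha}|\nabla \ln u|^2 - \frac{1}{\alpha}F(t),
\end{equation*}
where $F(t):=\sqrt{\tfrac{n\alpha}{2a}}\bigl[(\sqrt{\tfrac{n\alpha}{2a}}+\sqrt{B_2})\tfrac{1}{t}+\tfrac{\sqrt{B_3}}{\sqrt{t}}+\sqrt{B_1}\bigr]$. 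This casts the estimate in exactly the form needed to control $\ln u$ along spacetime paths.

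Next, I would introduce the path $\eta(s)=(\gamma(s),\tau(s))$, $s\in[0,1]$, where $\gamma(0)=y$, $\gamma(1)=x$ is the minimal geodesic and $\tau(s)=(1-s)t_2+st_1$ is the linear time interpolation, so that $\eta(0)=(y,t_2)$ and $\eta(1)=(x,t_1)$. Differentiating along $\eta$,
\begin{equation*}
\frac{d}{ds}\ln u(\eta(s)) = \langle \nabla \ln u,\gamma'(s)\rangle + (t_1-t_2)\,\partial_t \ln u.
\end{equation*}
Because $t_1-t_2<0$, substituting the lower bound on $\partial_t \ln u$ produces an upper bound on $\tfrac{d}{ds}\ln u(\eta(s))$ in which the quadratic term $\tfrac{t_2-t_1}{\alpha}|\nabla\ln u|^2$ has precisely the sign required for absorption.

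Finally, I would apply Young's inequality $\langle \nabla \ln u,\gamma'(s)\rangle \leq \tfrac{t_2-t_1}{\alpha}|\nabla \ln u|^2+\tfrac{\alpha|\gamma'(s)|^2}{4(t_2-t_1)}$ to cancel the gradient term, leaving
\begin{equation*}
\frac{d}{ds}\ln u(\eta(s)) \leq \frac{\alpha|\gamma'(s)|^2}{4(t_2-t_1)} + \frac{t_2-t_1}{\alpha}F(\tau(s)).
\end{equation*}
Integrating over $s\in[0,1]$ and changing variables $t=\tau(s)$ in the $F$-integral reduces the remaining work to evaluating $\int_{t_1}^{t_2}F(t)\,dt$, which splits into the elementary pieces $\xi\ln(t_2/t_1)$, $2\sqrt{\tfrac{n\alpha}{2a}B_3}(\sqrt{t_2}-\sqrt{t_1})$, and $\sqrt{\tfrac{n\alpha}{2a}B_1}(t_2-t_1)$, with $\xi=\tfrac{n\alpha}{2a}+\sqrt{\tfrac{n\alpha}{2a}B_2}$ as stated. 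Exponentiating the resulting bound on $\ln u(x,t_1)-\ln u(y,t_2)$ produces exactly the claimed inequality. The only mild technical point is the choice of metric with respect to which $\gamma$ is a geodesic and $|\gamma'(s)|^2$ is measured; as in standard Ricci-flow Harnack arguments one may work with $g(t_2)$ or any fixed slice, and beyond this the argument is a routine integration once Theorem~\ref{gradient estimates 1-1} is available.
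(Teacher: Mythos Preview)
Your proposal is correct and follows essentially the same route as the paper: integrate the Li--Yau estimate along the spacetime path $s\mapsto(\gamma(s),(1-s)t_2+st_1)$, use Young's (Cauchy) inequality to absorb the gradient term, and then explicitly integrate the three time pieces $1/t$, $1/\sqrt t$, and $1$. The only nuance is that the paper leaves $|\gamma'(s)|$ measured with respect to $g(t)$ at the running time $t=(1-s)t_2+st_1$ rather than a fixed slice, but this does not affect the structure of the argument.
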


\begin{remark}
When the metric $g(t)$ evolves under the Ricci flow, we derive the following Harnack-type inequality based on Corollary \ref{Harnack}:
\begin{equation}\label{Harnack of R.F.}
\begin{aligned}
u(x,t_{1}) \leq u(y,t_{2})\left(\frac{t_{2}}{t_{1}}\right)^{\frac{\xi}{\alpha}}
exp\left\{\int_{0}^{1} \frac{\alpha|\gamma'(s)|^{2}}{4(t_{2}-t_{1})} ds \right\}
\end{aligned}
\end{equation}
for any $x,y \in \mathrm{\bf{M}}$ and $0<t_{1}<t_{2} \leq T$, where $\alpha>1$, $\xi=\dfrac{n\alpha}{2a}+\sqrt{\dfrac{n^{2}\alpha^{4}}{4a^{2}(\alpha-1)^{2}}+\dfrac{n^{2}\alpha^{2}K}{4ab}}~$.
Clearly, inequality (\ref{Harnack of R.F.}) represents an improvement over Corollary 2 in \cite{liu shiping}.
\end{remark}

For the Hamilton-type gradient estimate of the heat equation (\ref{definition of heat eq.}) under the generalized Ricci flow (\ref{definition of generalized Ricii flow}), we have
\begin{theorem}\label{gradient estimates 1-2}
Let $(\mathrm{\bf{M}}^{n},g(t),H(t)),t\in[0,T]$, be the solution of the generalized Ricci flow (\ref{definition of generalized Ricii flow}) on an $n$-dimensional closed manifold $\mathrm{\bf{M}}$. Suppose that $u:\mathrm{\bf{M}}\times[0,T]\rightarrow\mathbb{R}$ is a smooth positive solution of the heat equation (\ref{definition of heat eq.}), then we obtain
\begin{equation*}
|\nabla u|^{2} \leq \frac{u^{2}}{t}\ln\left(\frac{A}{u}\right)
\end{equation*}
for any $A=\mathrm{max}_{\mathrm{\bf{M}}}u(\cdot,0)$ and $(x,t) \in \mathrm{\bf{M}}\times[0,T]$.
\end{theorem}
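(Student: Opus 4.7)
The plan is to apply Hamilton's classical maximum-principle argument, adapted to the generalized Ricci flow. Since $\mathrm{\bf{M}}$ is closed and $u$ satisfies the heat equation $\partial_{t}u=\Delta_{g(t)}u$ without any zeroth-order term, the parabolic maximum principle gives $u\leq A:=\max_{\mathrm{\bf{M}}}u(\cdot,0)$ on $\mathrm{\bf{M}}\times[0,T]$, so $v:=\ln(A/u)\geq0$ is smooth and well defined (if $u$ attains the value $A$ somewhere, I would replace $A$ by $A+\varepsilon$, run the argument, and let $\varepsilon\to 0^{+}$ at the end). Writing $|\nabla u|^{2}=u^{2}|\nabla v|^{2}$, the conclusion is equivalent to $F:=t|\nabla v|^{2}-v\leq 0$ on $\mathrm{\bf{M}}\times[0,T]$, which I aim to prove by the maximum principle.

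The crucial technical step is the evolution of $|\nabla v|^{2}$ under the flow. A direct computation from $\partial_{t}u=\Delta u$ first gives $(\partial_{t}-\Delta)v=-|\nabla v|^{2}$. For $|\nabla v|^{2}=g^{ij}(t)v_{i}v_{j}$, the metric evolution $\partial_{t}g^{ij}=2\mathrm{Ric}^{ij}-\tfrac{1}{2}(H^{2})^{ij}$ combined with the Bochner identity produces an exact cancellation of the $\mathrm{Ric}(\nabla v,\nabla v)$ terms, leaving
\begin{equation*}
(\partial_{t}-\Delta)|\nabla v|^{2}=-2|\nabla^{2}v|^{2}-2\langle\nabla v,\nabla|\nabla v|^{2}\rangle-\tfrac{1}{2}H^{2}(\nabla v,\nabla v).
\end{equation*}
All three terms on the right are non-positive, since $H^{2}$ is positive semidefinite. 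This Ricci cancellation is the main observation that drives the whole argument and explains why no curvature hypothesis is imposed in the statement, in sharp contrast with Theorem~\ref{gradient estimates 1-1}.

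Combining the two evolutions yields $(\partial_{t}-\Delta)F=2|\nabla v|^{2}-2t|\nabla^{2}v|^{2}-2t\langle\nabla v,\nabla|\nabla v|^{2}\rangle-\tfrac{t}{2}H^{2}(\nabla v,\nabla v)$. At an interior spatial maximum of $F$ at some $t_{0}>0$, the condition $\nabla F=0$ gives $t\nabla|\nabla v|^{2}=\nabla v$, hence $\langle\nabla v,\nabla|\nabla v|^{2}\rangle=|\nabla v|^{2}/t$; substituting makes the $|\nabla v|^{2}$ terms cancel and forces $(\partial_{t}-\Delta)F\leq 0$ at the maximum. Since $F(\cdot,0)=-v(\cdot,0)\leq 0$, the parabolic maximum principle then gives $F\leq 0$ on $\mathrm{\bf{M}}\times[0,T]$: a positive interior maximum would require $(\partial_{t}-\Delta)F\geq 0$, which together with the previous inequality forces $|\nabla^{2}v|=0$ and $\nabla v=0$ there, and thus $F=-v\leq 0$, a contradiction. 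This delivers the claimed estimate. The essentially nontrivial ingredient is the Ricci cancellation in the evolution of $|\nabla v|^{2}$; once that identity is in hand, the remaining maximum-principle argument is routine.
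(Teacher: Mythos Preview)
Your approach is essentially the paper's: the paper works with $P=t\,|\nabla u|^{2}/u-u\ln(A/u)$, which is exactly $uF$ in your notation, and shows $(\partial_{t}-\Delta)P\le 0$ directly, so the maximum principle applies with no further work. The factor $u$ absorbs the first-order term you are handling pointwise at the maximum.

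There is, however, a small but genuine gap in your final contradiction. At the spacetime maximum you correctly obtain $(\partial_{t}-\Delta)F=-2t|\nabla^{2}v|^{2}-\tfrac{t}{2}H^{2}(\nabla v,\nabla v)$, and combining with $(\partial_{t}-\Delta)F\ge 0$ forces $|\nabla^{2}v|=0$ and $H^{2}(\nabla v,\nabla v)=0$. But this does \emph{not} force $\nabla v=0$: the tensor $H^{2}$ is only positive semidefinite, so $\nabla v$ may lie in its kernel. Hence the conclusion $F=-v\le 0$ at that point is unjustified, and no contradiction has been produced.

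The fix is already implicit in your computation. Observe that
\[
2|\nabla v|^{2}-2t\langle\nabla v,\nabla|\nabla v|^{2}\rangle=-2\langle\nabla v,\nabla F\rangle,
\]
so your evolution identity becomes the global inequality
\[
(\partial_{t}-\Delta)F+2\langle\nabla v,\nabla F\rangle=-2t|\nabla^{2}v|^{2}-\tfrac{t}{2}H^{2}(\nabla v,\nabla v)\le 0,
\]
and the parabolic maximum principle with drift immediately yields $F\le\max_{t=0}F\le 0$. Equivalently, multiplying by $u$ gives $(\partial_{t}-\Delta)(uF)=u\bigl[(\partial_{t}-\Delta)F+2\langle\nabla v,\nabla F\rangle\bigr]\le 0$, which is precisely the paper's route.
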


\begin{remark}
In fact, Theorem \ref{gradient estimates 1-2} extends Theorem 3.3 presented in \cite{Zhang-Hamilton estimate}, as the Ricci flow is a specific instance of the generalized Ricci flow.
\end{remark}
\subsection{Parabolic frequency under the generalized Ricci flow}
The elliptic frequency
$$N_{e}(r)=\frac{r\int_{B(r,p)} |\nabla u(x)|^{2} dx}{\int_{\partial B(p,r)} u^{2}(x) d\sigma }$$
for a harmonic function $u$ on $\mathbb{R}^{n}$ was introduced by Almgren \cite{Almgren} in 1979. Here, $d\sigma$ is the induced $(n-1)$-dimensional Hausdorff measure on $\partial B(r, p)$, where $B(r, p)$ represents the ball in $\mathbb{R}^{n}$ and $p$ is a fixed point in $\mathbb{R}^{n}$.
Almgren observed that $N_{e}(r)$ is monotone nondecreasing with respect to $r$, utilizing this property to examine the local regularity of harmonic functions and minimal surfaces. The monotonicity of $N_{e}(r)$ has also been crucial in the investigation of unique continuation properties of elliptic operators on Riemannian manifolds, as demonstrated by Garofalo and Lin \cite{Garofalo and Lin Fanghua,Garofalo and Lin Fanghua 1987} and in estimating the size of nodal sets of solutions to elliptic and parabolic equations, as shown by Lin \cite{Lin Fanghua}. For additional applications, refer to \cite{Colding and Minicozzi 1997,Han Qing and Lin Fanghua} and others.

In 1996, Poon \cite{Poon} introduced the parabolic frequency
$$N_{p}(t)=\dfrac{t\int_{\mathbb{R}^{n}} |\nabla u|^{2}(x, T-t)G(x,x_{0},t) dx}{\int_{\mathbb{R}^{n}} u^{2}(x, T-t)G(x,x_{0},t) dx},$$
where $u$ represents a solution to the heat equation on $\mathbb{R}^{n}\times[0,T]$ and $G(x, x_{0}, t)$ is the heat kernel with a pole at $(x_{0}, 0)$. He proved that $N_{p}(t)$ is monotone nondecreasing and derived several unique continuation results based on this property. Furthermore, Poon \cite{Poon} established the monotonicity of parabolic frequency on Riemannian manifolds under specific curvature conditions, a result that was independently verified by Ni \cite{Ni Lei}. Colding and Minicozzi \cite{Colding and Minicozzi 2022} further proved the monotonicity of parabolic frequency on manifolds using the drift Laplacian operator, without imposing any curvature or additional assumptions. Additionally, Li and Wang \cite{Li Xiaolong and Wang Kui} explored the parabolic frequency on compact Riemannian manifolds and in the context of the 2-dimensional Ricci flow by applying the matrix Harnack's inequality in \cite[Proposition 10.20]{Hamilton's Ricci Flow}.

For a general Ricci flow, Baldauf and Kim \cite{Baldauf and Kim-Ricci flow} defined the following parabolic frequency for a solution $u(t)$ of the heat equation:
$$N(t)=-\frac{(T-t)\parallel \nabla_{g(t)}u \parallel_{L^{2}(d\mu)}^{2}}{\parallel u \parallel_{L^{2}(d\mu)}^{2}}
\cdot \mathrm{exp}\left\{-\int_{t_{0}}^{t} \frac{1-\rho(s)}{T-s} ds\right\},$$
where $t \in [t_{0}, t_{1}] \subset (0, T )$, $\rho(t)$ represents a time-dependent function and $d\mu$ denotes the weighted measure. They proved that the parabolic frequency $N(t)$ is monotonically increasing along the Ricci flow when the Bakry-\'{E}mery Ricci curvature is bounded, thereby establishing the backward uniqueness. Additionally, Baldauf, Ho, and Lee derived analogous results under the mean curvature flow in \cite{Baldauf mean curvature flow}. We remind readers that further related research can be found in \cite{Li Xiaolong and Zhang Qi,Liu Hao Yue-frequency,Li Yi Ricci and Ricci harmonic flow}.

In \cite{Li Yi-G2 flow}, Li, Li, and Xu investigated the monotonicity of parabolic frequency under the Laplacian $G_{2}$ flow on manifolds. Specifically, they considered the monotonicity of parabolic frequency for solutions of the heat equation with bounded Ricci curvature.

Inspired by \cite{Li Yi-G2 flow}, we study the parabolic frequency for the solution of the heat equation (\ref{definition of heat eq.}) under the generalized Ricci flow (\ref{definition of generalized Ricii flow}) with bounded Ricci curvature. The parabolic frequency for the positive solution of the heat equation (\ref{definition of heat eq.}) is defined as follows:
$$U(t)=
\mathrm{exp}\{E(t)\}\frac{h(t)\int_{\mathrm{\bf{M}}} |\nabla_{g(t)}u|^{2}_{g(t)} d\mu_{g(t)}}{\int_{\mathrm{\bf{M}}} u^{2} d\mu_{g(t)}},$$
where the definition of $E(t)$ can be found in (\ref{frequency E(t)}), $h(t)$ is a time-dependent function. Utilizing
Theorem \ref{gradient estimates 1-1} and Theorem \ref{gradient estimates 1-2}, we obtain the following result.

\begin{theorem}\label{frequency 1-1}
Let $(\mathrm{\bf{M}}^{n},g(t),H(t)),t\in[0,T]$, be the solution of the generalized Ricci flow (\ref{definition of generalized Ricii flow}) on an $n$-dimensional closed manifold $\mathrm{\bf{M}}$ with $-\dfrac{K_1}{t}g \leq \mathrm{Ric} \leq \dfrac{K_2}{t}g$ and $H^2\leq\dfrac{K_{3}}{t}g$, where $K_{1}$, $K_{2}$, $K_{3}$ and $T<\infty$ are positive constants. Assume that $|\nabla H^{2}| \leq K_{4}$ for some constant $K_{4}>0$. Suppose that $u:\mathrm{\bf{M}}\times[0,T]\rightarrow\mathbb{R}$ is a smooth positive solution of the heat equation (\ref{definition of heat eq.}) with $\kappa \leq u(\cdot,0) \leq A$, then we have the following.
\begin{subequations}
\begin{align*}
     &\text{(i) if $h(t)<0$, then the parabolic frequency $U(t)$ is monotone increasing along the generalized}\\
     &\text{Ricci flow.}\\
     &\text{(ii) if $h(t)>0$, then the parabolic frequency $U(t)$ is monotone decreasing along the generalized}\\
     &\text{Ricci flow.}
\end{align*}
\end{subequations}
\end{theorem}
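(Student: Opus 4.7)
The plan is to compute the derivative of $\ln|U(t)|$ and show it is nonpositive, which handles both conclusions uniformly: when $h(t)<0$ we have $U(t)<0$, so monotone increase of $U$ is equivalent to monotone decrease of $|U|$; when $h(t)>0$, monotone decrease of $U$ is likewise equivalent to monotone decrease of $|U|$.

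Set $I(t) := \int_{\mathrm{\bf{M}}} u^2\, d\mu_{g(t)}$ and $J(t) := \int_{\mathrm{\bf{M}}} |\nabla u|^2_{g(t)}\, d\mu_{g(t)}$, so that $U(t) = e^{E(t)}\, h(t)\, J(t)/I(t)$. From the flow equations (\ref{definition of generalized Ricii flow}) one obtains $\partial_t d\mu = (-R + \tfrac{1}{4}|H|^2)\, d\mu$ and $\partial_t g^{ij} = 2\mathrm{Ric}^{ij} - \tfrac{1}{2}(H^2)^{ij}$. Combining these with $\partial_t u = \Delta u$ and integrating by parts on the closed manifold, I arrive at
\begin{equation*}
I'(t) = -2J(t) + \int_{\mathrm{\bf{M}}} u^2\bigl(-R + \tfrac{1}{4}|H|^2\bigr) d\mu,
\end{equation*}
\begin{equation*}
J'(t) = 2\!\!\int_{\mathrm{\bf{M}}}\!\!\mathrm{Ric}(\nabla u,\nabla u) d\mu - 2\!\!\int_{\mathrm{\bf{M}}}\!\!(\Delta u)^2 d\mu - \tfrac{1}{2}\!\!\int_{\mathrm{\bf{M}}}\!\!H^2(\nabla u,\nabla u) d\mu + \!\!\int_{\mathrm{\bf{M}}}\!\!|\nabla u|^2\bigl(-R + \tfrac{1}{4}|H|^2\bigr) d\mu.
\end{equation*}

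I then assemble
\begin{equation*}
\bigl(\ln |U(t)|\bigr)' = E'(t) + \frac{h'(t)}{h(t)} + \frac{J'(t) I(t) - I'(t) J(t)}{I(t) J(t)}.
\end{equation*}
Substituting, the numerator $J'I - I'J$ contains the leading contribution $-2I\int(\Delta u)^2 d\mu + 2J^2$, which is nonpositive by Cauchy-Schwarz $J^2 = \bigl(-\int u \Delta u\, d\mu\bigr)^2 \leq I\cdot\int(\Delta u)^2 d\mu$; this is the Rayleigh-quotient cancellation that drives the whole argument. The remaining residuals split into integrals of the types $\int R u^2 d\mu$, $\int R|\nabla u|^2 d\mu$, $\int |H|^2 u^2 d\mu$, $\int |H|^2|\nabla u|^2 d\mu$, $\int \mathrm{Ric}(\nabla u,\nabla u)d\mu$, and $\int H^2(\nabla u,\nabla u)d\mu$, each controlled pointwise by the hypotheses $-K_1/t\, g \leq \mathrm{Ric} \leq K_2/t\, g$ and $H^2 \leq K_3/t\, g$, yielding bounds of the shape $C_1 I/t + C_2 J/t$. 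To trade the $J$-weighted pieces for functions of $t$ alone, I invoke the Hamilton-type estimate of Theorem \ref{gradient estimates 1-2}, $|\nabla u|^2 \leq u^2 t^{-1}\ln(A/u)$, together with the preserved lower bound $u(\cdot,t)\geq\kappa$ (which follows from the minimum principle for the heat equation on the closed manifold), giving the pointwise control $|\nabla u|^2/u^2 \leq t^{-1}\ln(A/\kappa)$; the Li-Yau-type estimate of Theorem \ref{gradient estimates 1-1} supplies the analogous control when $\partial_t u / u$ is encountered. After these substitutions, one arrives at
\begin{equation*}
\bigl(\ln|U|\bigr)'(t) \leq E'(t) + \frac{h'(t)}{h(t)} + \varphi(t)
\end{equation*}
for an explicit function $\varphi(t)$ depending on $n, K_1, K_2, K_3, K_4, A, \kappa, T$. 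The exponent $E(t)$ in (\ref{frequency E(t)}) is chosen precisely so that $E'(t) + h'(t)/h(t) + \varphi(t) \leq 0$, delivering the monotone decrease of $|U|$ and hence cases (i) and (ii).

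The principal obstacle, absent from the pure Ricci flow analysis of \cite{Baldauf and Kim-Ricci flow}, is the torsion contribution from the three-form $H$: the terms $\int H^2(\nabla u,\nabla u) d\mu$ and $\int|H|^2|\nabla u|^2 d\mu$ must be absorbed without overwhelming the Cauchy-Schwarz reserve. This is where the hypothesis $|\nabla H^2|\leq K_4$ enters: an integration by parts transfers one derivative of $|\nabla u|^2$ onto $H^2$, trading $\int H^2(\nabla u,\nabla u) d\mu$ for a mixed term whose size is governed by $K_4$ together with Theorem \ref{gradient estimates 1-2}. Balancing this delicate interplay between the Rayleigh reserve, the pointwise gradient estimates, and the defining ODE for $E(t)$ is the core technical step of the proof.
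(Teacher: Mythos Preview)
Your computation of $I'$ and $J'$ uses the wrong evolution of the measure. The frequency $U(t)$ is defined with respect to the conjugate heat kernel weighted measure $d\mu_{g(t)} = \mathbf{K}\,dV_{g(t)}$, whose evolution (equation~(\ref{volume element})) is $\partial_t(d\mu) = -(\Delta\mathbf{K}/\mathbf{K})\,d\mu$, not $(-R+\tfrac14|H|^2)\,d\mu$; the latter is the evolution of the bare volume form. With the correct measure one finds simply $I'(t) = -2J(t)$ and $J'(t) = -2\int_{\mathbf{M}}|\nabla^2 u|^2\,d\mu - \tfrac12\int_{\mathbf{M}}|i_{\nabla u}H|^2\,d\mu$, with no scalar-curvature or $|H|^2$ residuals. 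More seriously, your ``Rayleigh-quotient cancellation'' rests on $J = -\int_{\mathbf{M}} u\,\Delta u\,d\mu$, which is \emph{false} for the weighted measure: integration by parts against $\mathbf{K}\,dV$ produces a drift term $\int_{\mathbf{M}} u\,\langle\nabla\ln\mathbf{K},\nabla u\rangle\,d\mu$, so the Cauchy--Schwarz inequality $J^2 \le I\int(\Delta u)^2\,d\mu$ does not hold and cannot absorb the Hessian term.

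The paper's mechanism is entirely different from Cauchy--Schwarz. It rewrites $I'$ as in (\ref{equality I'(t)}) so that the Li--Yau estimate (\ref{frequency-estimate 1}) with $\alpha=2$ bounds $u\partial_t u - \tfrac12|\nabla u|^2$ from below, then applies the elementary Young splitting $u\,\Delta u \le \tfrac{nc(t)}{2}u^2 + \tfrac{2}{nc(t)}(\Delta u)^2$ to generate a $(\Delta u)^2$ integral in the lower bound for $I'$. On the $D'$ side it uses $|\nabla^2 u|^2 \ge \tfrac1n(\Delta u)^2$, and the two $(\Delta u)^2$ contributions are made to cancel exactly by the choice $c(t)=t^{-1}\ln(A/\kappa)$, which is where the Hamilton estimate (\ref{frequency-estimate 2}) enters. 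Finally, the torsion term $-\tfrac12 h(t)\int|i_{\nabla u}H|^2\,d\mu$ in (\ref{last equality D'(t)}) always carries a favorable sign and is simply discarded; no integration by parts against $\nabla H^2$ is performed here, and the bound $|\nabla H^2|\le K_4$ enters only through the constants in Theorem~\ref{gradient estimates 1-1}, not through any direct manipulation in the frequency computation.
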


\begin{remark}
In particular, the forementioned results also apply to the Ricci flow, which shows that Theorem \ref{frequency 1-1} serves as an extension of Theorem 1.3 in \cite{Li Yi Ricci and Ricci harmonic flow}.
\end{remark}

The remainder of this article is organized as follows. In section \ref{section 3}, we prove the Li-Yau-type gradient estimate (Theorem \ref{gradient estimates 1-1}) and the Hamilton-type gradient estimate (Theorem \ref{gradient estimates 1-2}) under the generalized flow (\ref{definition of generalized Ricii flow}) with bounded Ricci curvature. As an application, we derive the Harnack inequality (Theorem \ref{Harnack}) on spacetime. In section \ref{section 4}, using Theorem \ref{gradient estimates 1-1} and Theorem \ref{gradient estimates 1-2}, we prove the the monotonicity of the parabolic frequency for the solution of the heat equation (\ref{definition of heat eq.}) under the generalized Ricci flow (\ref{definition of generalized Ricii flow}) with bounded Ricci curvature. Consequently, we obtain the integral-type Harnack inequality (Corollary \ref{integral Harnack}).

\section{Gradient estimates under generalized Ricci flow}\label{section 3}
In this section, we prove Li-Yau-type and Hamilton-type estimates for positive solutions to the heat equation (\ref{definition of heat eq.}) coupled with the generalized Ricci flow (\ref{definition of generalized Ricii flow}).Subsequently, we derive the Harnack inequality on spacetime. To facilitate the proof of Theorem \ref{gradient estimates 1-1}, we present the following Lemmas.

\begin{lemma}\label{gradient estimates lemma}
Let $(\mathrm{\bf{M}}^{n},g(t),H(t)),t\in[0,T]$, be the solution of the generalized Ricci flow (\ref{definition of generalized Ricii flow}) on an $n$-dimensional closed manifold $\mathrm{\bf{M}}$. Suppose that $u:\mathrm{\bf{M}}\times[0,T]\rightarrow\mathbb{R}$ is a smooth positive function satisfying the heat equation (\ref{definition of heat eq.}). Then for any given $\alpha\in\mathbb{R}$ and $f=\ln u$, the function
$$F=t\left(|\nabla f|^{2}-\alpha\partial_{t}f\right)$$
satisfies the equality
\begin{equation*}
\begin{aligned}
(\Delta-\partial_{t})F
=&-2\langle\nabla f,\nabla F\rangle+t\left(2|\nabla^{2}f|^{2}+2\alpha\langle\mathrm{Ric}-\dfrac{1}{4}H^{2},\nabla^{2}f\rangle\right)\\
&+t\left[2\alpha\mathrm{Ric}(\nabla f,\nabla f)-\dfrac{\alpha}{2}H^{2}(\nabla f,\nabla f)+\dfrac{1}{2}H^{2}(\nabla f,\nabla f)\right]\\
&+t\alpha\langle\nabla f,\dfrac{1}{4}\nabla|H|^{2}-\dfrac{1}{2}\mathrm{div}H^{2}\rangle
-\left(|\nabla f|^{2}-\alpha\partial_{t}f\right).
\end{aligned}
\end{equation*}
\end{lemma}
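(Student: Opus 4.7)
The plan is to linearize by setting $f=\log u$. Since $\Delta u = u(\Delta f + |\nabla f|^{2})$, the heat equation becomes $\partial_{t}f = \Delta f + |\nabla f|^{2}$, so that
\[
(\Delta-\partial_{t})F = t\bigl[(\Delta-\partial_{t})|\nabla f|^{2}-\alpha(\Delta-\partial_{t})\partial_{t}f\bigr] - \bigl(|\nabla f|^{2}-\alpha\partial_{t}f\bigr),
\]
where the final term arises from differentiating the prefactor $t$. The strategy is to evaluate each bracketed piece and then assemble.

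For $(\Delta-\partial_{t})|\nabla f|^{2}$, I combine Bochner's formula for $\Delta|\nabla f|^{2}$ with the evolution $\partial_{t}g^{ij}=2R^{ij}-\tfrac{1}{2}(H^{2})^{ij}$ (obtained by raising indices in $\partial_{t}g_{ij}=-2R_{ij}+\tfrac{1}{2}(H^{2})_{ij}$). The $\mathrm{Ric}(\nabla f,\nabla f)$ contributions cancel between the two computations, and rewriting $\nabla(\Delta f-\partial_{t}f)=-\nabla|\nabla f|^{2}$ yields
\[
(\Delta-\partial_{t})|\nabla f|^{2} = 2|\nabla^{2}f|^{2} - 2\langle\nabla f,\nabla|\nabla f|^{2}\rangle + \tfrac{1}{2}H^{2}(\nabla f,\nabla f).
\]

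For $(\Delta-\partial_{t})\partial_{t}f$, I exploit the identity $\partial_{t}f = \Delta f + |\nabla f|^{2}$ to decompose $(\Delta-\partial_{t})\partial_{t}f = (\Delta-\partial_{t})\Delta f + (\Delta-\partial_{t})|\nabla f|^{2}$. The first summand is governed by the commutator
\[
\partial_{t}\Delta f - \Delta\partial_{t}f = (\partial_{t}g^{ij})\nabla_{i}\nabla_{j}f - g^{ij}(\partial_{t}\Gamma^{k}_{ij})\nabla_{k}f,
\]
and the trace $g^{ij}\partial_{t}\Gamma^{k}_{ij}$ is evaluated from the standard variational formula for Christoffels applied to $\partial_{t}g_{ij}=-2R_{ij}+\tfrac{1}{2}(H^{2})_{ij}$. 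Here the contracted second Bianchi identity $\mathrm{div}\,\mathrm{Ric}=\tfrac{1}{2}\nabla R$ causes the Ricci-derivative terms to cancel, leaving only $\tfrac{1}{2}(\mathrm{div}\,H^{2})^{k}-\tfrac{1}{4}\nabla^{k}|H|^{2}$. Substituting this back, the resulting $-\Delta|\nabla f|^{2}$ piece combines with Bochner so that no free $|\nabla^{2}f|^{2}$ contribution survives inside $(\Delta-\partial_{t})\partial_{t}f$.

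Assembly is then bookkeeping. The cross-gradient combination $t(-2\langle\nabla f,\nabla|\nabla f|^{2}\rangle + 2\alpha\langle\nabla f,\nabla\partial_{t}f\rangle)$ collapses to $-2\langle\nabla f,\nabla F\rangle$ since $F/t = |\nabla f|^{2} - \alpha\partial_{t}f$; the $|\nabla^{2}f|^{2}$ coefficient is exactly $2t$ because this term is inherited only from $(\Delta-\partial_{t})|\nabla f|^{2}$; and the two Hessian inner products with $\mathrm{Ric}$ and $H^{2}$ coming from $-\alpha t(\Delta-\partial_{t})\Delta f$ repackage as $2\alpha\langle\mathrm{Ric}-\tfrac{1}{4}H^{2},\nabla^{2}f\rangle$, matching the claimed identity. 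The main technical obstacle is the careful computation of $g^{ij}\partial_{t}\Gamma^{k}_{ij}$ together with the Bianchi-type cancellation of the Ricci-derivative terms, since a sign error or missed factor there would corrupt the $\mathrm{div}\,H^{2}$ and $\nabla|H|^{2}$ contributions on the right-hand side.
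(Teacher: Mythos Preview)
Your proposal is correct and follows essentially the same route as the paper: both arguments rest on Bochner's formula, the metric evolution $\partial_t g^{ij}=2R^{ij}-\tfrac12(H^2)^{ij}$, and the commutator $[\partial_t,\Delta]f$ (the paper obtains it from $\Delta f=\tfrac{1}{\sqrt{G}}\partial_i(\sqrt{G}g^{ij}\partial_j f)$, you from the Christoffel variation plus contracted Bianchi, which is equivalent). The only cosmetic difference is that the paper computes $\Delta F$ and $\partial_t F$ separately and then subtracts, whereas you apply the heat operator $(\Delta-\partial_t)$ to $|\nabla f|^2$ and $\partial_t f$ individually before assembling; the intermediate identities and final bookkeeping coincide.
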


\begin{proof}[\bf{Proof}]

Since $f=\ln u$, we have
\begin{equation}\label{equation of f}
\begin{aligned}
\Delta f=\partial_t f - |\nabla f|^2.
\end{aligned}
\end{equation}
Using $$\Delta f=\frac{1}{\sqrt{G}} \partial_{i}\left(\sqrt{G} g^{ij} \partial_j f\right),$$
where $G=\mathrm{det}(g_{ij})$, and generalized Ricci flow equation, we conclude
\begin{equation}\label{evolution of Lf}
\begin{aligned}
\partial_t (\Delta f)
=2\langle\mathrm{Ric}-\dfrac{1}{4}H^2,\nabla^2 f \rangle+\dfrac{1}{4}\langle\nabla f, \nabla|H|^2\rangle
-\frac{1}{2}\langle\nabla f,\mathrm{div}H^2\rangle+\Delta(\partial_t f).
\end{aligned}
\end{equation}
According to Bochner formula, we note
\begin{equation}\label{bochner formula}
\Delta(|\nabla f|^2)=2|\nabla^2 f|^2 +2\mathrm{Ric}(\nabla f, \nabla f) + 2\langle \nabla f, \nabla \Delta f \rangle.
\end{equation}
Therefore, applying (\ref{equation of f}), (\ref{evolution of Lf}), and (\ref{bochner formula}) yields
\begin{equation}\label{laplace F}
\begin{aligned}
\Delta F
=&t\left( 2|\nabla^2 f|^2 + 2\mathrm{Ric}(\nabla f, \nabla f) + 2\langle \nabla f, \nabla \Delta f \rangle - \alpha \partial_t (\Delta f)
+ 2\alpha \langle \mathrm{Ric} - \frac{1}{4}H^2, \nabla^2 f \rangle \right.\\
&\left.+ \alpha \langle \nabla f, \frac{1}{4}\nabla |H|^2 - \frac{1}{2}\mathrm{div}H^2 \rangle \right).
\end{aligned}
\end{equation}
Again using (\ref{equation of f}) and generalized Ricci flow equation, we get
\begin{equation*}
\begin{aligned}
\partial_t (\Delta f)
=&\partial_t^2 f -\partial_t (|\nabla f|^2)\\
=&\partial_t^2 f -\left( 2 \mathrm{Ric}(\nabla f, \nabla f)-\frac{1}{2}H^2(\nabla f, \nabla f)+2\langle \nabla f, \nabla (\partial_t f) \rangle \right).
\end{aligned}
\end{equation*}
Substituting the above equation into (\ref{laplace F}), we now obtain
\begin{equation*}
\begin{aligned}
\Delta F =& t \left[ 2|\nabla^2 f|^2 + 2\mathrm{Ric}(\nabla f, \nabla f) + 2\langle \nabla f, \nabla \Delta f \rangle - \alpha \partial_t^2 f
+ 2\alpha\mathrm{Ric}(\nabla f, \nabla f) - \frac{\alpha}{2}H^2(\nabla f, \nabla f)\right.\\
&\left.+ 2\alpha \langle \nabla f, \nabla (\partial_t f) \rangle + 2\alpha \langle \mathrm{Ric} - \frac{1}{4}H^2, \nabla^2 f \rangle
+ \alpha \langle \nabla f, \frac{1}{4}\nabla |H|^2 - \frac{1}{2}\mathrm{div}H^2 \rangle \right].
\end{aligned}
\end{equation*}
Some computations show that
\begin{equation*}
\partial_t F
= |\nabla f|^2 - \alpha \partial_t f + t \left[ 2 \mathrm{Ric}(\nabla f, \nabla f) - \frac{1}{2}H^2(\nabla f, \nabla f)
+ 2\langle \nabla f, \nabla (\partial_t f) \rangle - \alpha \partial_t^2 f \right].
\end{equation*}
Combining the above two equations, we deduce that
\begin{equation*}
\begin{aligned}
(\Delta - \partial_t)F
=&t [ 2\langle \nabla f, \nabla \Delta f \rangle + 2\alpha \langle \nabla f, \nabla (\partial_t f) \rangle
- 2\langle \nabla f, \nabla (\partial_t f) \rangle ] \\
&+ t \left[ 2|\nabla^2 f|^2 + 2\alpha \langle \mathrm{Ric} - \frac{1}{4}H^2, \nabla^2 f \rangle \right]\\
&+ t \left[ 2\alpha \mathrm{Ric}(\nabla f, \nabla f) - \frac{\alpha}{2}H^2(\nabla f, \nabla f) + \frac{1}{2}H^2(\nabla f, \nabla f)\right]\\
&+ t\alpha \langle \nabla f, \frac{1}{4}\nabla |H|^2 - \frac{1}{2}\mathrm{div}H^2 \rangle - \left(|\nabla f|^2 - \alpha \partial_t f\right).
\end{aligned}
\end{equation*}
The Lemma is completed with the help of the equation
$$
t \left[ 2\langle \nabla f, \nabla \Delta f \rangle + 2\alpha \langle \nabla f, \nabla (\partial_t f) \rangle
- 2\langle \nabla f, \nabla (\partial_t f) \rangle \right]
=-2\langle \nabla f, \nabla F \rangle.
$$

\end{proof}

\begin{lemma}\label{grad(Q)}
Let $(\mathrm{\bf{M}}^{n}, g)$ be an n-dimensional Riemannian manifold. If $Q$ is a $2$-tensor, then the following holds
$$|\nabla(\mathrm{tr}Q)|^{2} \leq n|\nabla Q|^{2}.$$
\end{lemma}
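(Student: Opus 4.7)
The plan is to reduce the inequality to a pointwise statement in an orthonormal frame and then invoke the Cauchy–Schwarz inequality. Since both sides of the desired inequality are tensorial quantities evaluated at a point, I fix an arbitrary point $p\in\mathrm{\bf{M}}$ and work in a local orthonormal frame $\{e_i\}_{i=1}^{n}$ at $p$. In such a frame, $\mathrm{tr}\,Q=\sum_{i}Q_{ii}$, and because $\nabla g\equiv 0$, the covariant derivative commutes with trace in the sense that $\nabla_{k}(\mathrm{tr}\,Q)=\sum_{i}\nabla_{k}Q_{ii}$.

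Next, I would compute
\[
|\nabla(\mathrm{tr}\,Q)|^{2}=\sum_{k}\Bigl(\sum_{i}\nabla_{k}Q_{ii}\Bigr)^{2}
\]
and apply Cauchy–Schwarz to the inner sum over $i$, viewing $\sum_{i}\nabla_{k}Q_{ii}=\sum_{i}1\cdot\nabla_{k}Q_{ii}$. This yields
\[
\Bigl(\sum_{i}\nabla_{k}Q_{ii}\Bigr)^{2}\le n\sum_{i}\bigl(\nabla_{k}Q_{ii}\bigr)^{2}.
\]
Summing over $k$ gives $|\nabla(\mathrm{tr}\,Q)|^{2}\le n\sum_{k,i}(\nabla_{k}Q_{ii})^{2}$, which is bounded by dropping off-diagonal terms to the full tensor norm:
\[
\sum_{k,i}\bigl(\nabla_{k}Q_{ii}\bigr)^{2}\le\sum_{k,i,j}\bigl(\nabla_{k}Q_{ij}\bigr)^{2}=|\nabla Q|^{2}.
\]
Combining the two inequalities yields the claim at $p$, and since $p$ was arbitrary, the pointwise inequality holds on all of $\mathrm{\bf{M}}$.

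There is no real obstacle here; the only care required is to make sure the orthonormal frame is legitimate for writing $\mathrm{tr}\,Q=\sum_{i}Q_{ii}$ and for identifying $|\nabla Q|^{2}$ with $\sum_{k,i,j}(\nabla_{k}Q_{ij})^{2}$. Both are standard, and the factor $n$ arises precisely from the Cauchy–Schwarz step applied over the $n$ diagonal entries.
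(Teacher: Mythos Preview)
Your proof is correct. It differs from the paper's approach, which proceeds via the orthogonal decomposition $Q=\dfrac{\mathrm{tr}\,Q}{n}\,g+V$ with $V$ trace-free; differentiating gives $\nabla Q=\dfrac{\nabla(\mathrm{tr}\,Q)}{n}\otimes g+\nabla V$, and since $\mathrm{tr}\,V=0$ implies these two pieces are orthogonal, one obtains the identity $|\nabla Q|^{2}=\dfrac{1}{n}|\nabla(\mathrm{tr}\,Q)|^{2}+|\nabla V|^{2}$, from which the inequality follows. Your argument is more hands-on---a direct pointwise computation in an orthonormal frame followed by Cauchy--Schwarz on the diagonal entries---and avoids any tensor decomposition. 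The paper's route is slightly more conceptual and yields an exact identity (with the deficit $|\nabla V|^{2}$) rather than just an inequality, making the equality case transparent: equality holds precisely when the trace-free part of $Q$ is parallel. Both arguments are short and self-contained; yours is arguably the quicker way to the bare inequality, while the decomposition gives a bit more structural information.
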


\begin{proof}[\bf{Proof}]
$Q$ can be orthogonally decomposed as $$Q=\frac{\mathrm{tr}Q}{n}g+V,$$
where $V$ is totally trace-free.
Therefore we obtain
\begin{equation}\label{gradient of Q}
\begin{aligned}
\nabla Q=\frac{\nabla(\mathrm{tr}Q)}{n}\otimes g + \nabla V.
\end{aligned}
\end{equation}
By directly calculating using $\mathrm{tr}V=0$, it can be seen that
$$\left\langle \frac{\nabla(\mathrm{tr}Q)}{n}\otimes g, \nabla V \right\rangle =0.$$
Taking the squared norm of (\ref{gradient of Q}) and then using this property, we have
$$|\nabla Q|^{2}=\left|\frac{\nabla(\mathrm{tr}Q)}{n}\otimes g\right|^{2}+|\nabla V|^{2}.$$
According to $\left|\dfrac{\nabla(\mathrm{tr}Q)}{n}\otimes g\right|^{2}=\dfrac{1}{n}|\nabla(\mathrm{tr}Q)|^{2}$, we conclude that
\begin{equation*}
n|\nabla Q|^{2}=|\nabla(\mathrm{tr}Q)|^{2}+|\nabla V|^{2} \geq |\nabla(\mathrm{tr}Q)|^{2}.
\end{equation*}

\end{proof}

Using the Lemma \ref{gradient estimates lemma} and Lemma \ref{grad(Q)}, we prove Li-Yau-type estimate.

\begin{proof}[\bf{Proof of Theorem \ref{gradient estimates 1-1}}]

Let $f=\ln u$ and
$$F=t\left(|\nabla f|^{2}-\alpha\partial_{t}f\right).$$
Observe that, the Theorem \ref{gradient estimates 1-1} is true when $F \leq 0$, hence we always assume that $F > 0$ in the sequel.
By Lemma \ref{gradient estimates lemma}, we have
\begin{equation}\label{laplace-dt of F}
\begin{aligned}
(\Delta-\partial_{t})F
=&-2\langle\nabla f,\nabla F\rangle+t\left(2|\nabla^{2}f|^{2}+2\alpha\langle\mathrm{Ric}-\dfrac{1}{4}H^{2},\nabla^{2}f\rangle\right)\\
&+t\left[2\alpha\mathrm{Ric}(\nabla f,\nabla f)-\dfrac{\alpha}{2}H^{2}(\nabla f,\nabla f)+\dfrac{1}{2}H^{2}(\nabla f,\nabla f)\right]\\
&+t\alpha\langle\nabla f,\dfrac{1}{4}\nabla|H|^{2}-\dfrac{1}{2}\mathrm{div}H^{2}\rangle
-\left(|\nabla f|^{2}-\alpha\partial_{t}f\right).
\end{aligned}
\end{equation}
For the second term on the right-hand side of the equation (\ref{laplace-dt of F}), using the trick in \cite{Ricci Harmonic Flow} (\cite{cao xiaodong}), we have
\begin{equation*}
\begin{aligned}
&(a\alpha+2b\alpha)|\nabla^2f|^2+\alpha\langle \mathrm{Ric}, \nabla^2f \rangle-\frac{\alpha}{4}\langle H^2, \nabla^2f \rangle\\
=&a\alpha|\nabla^2f|^2+\alpha\left|\sqrt{b}\nabla^2f+\frac{1}{2\sqrt{b}}\mathrm{Ric}\right|^2
-\frac{\alpha}{4b}|\mathrm{Ric}|^2+\alpha\left|\sqrt{b}\nabla^2f-\frac{1}{8\sqrt{b}}H^2\right|^2-\frac{\alpha}{64b}\left|H^2\right|^2,
\end{aligned}
\end{equation*}
where $a, b>0$ are constants satisfying $a+2b=\dfrac{1}{\alpha}$.
We obtain
$$|\mathrm{Ric}|^2 \leq \frac{Kn}{t^{2}}$$
from $-\dfrac{K_1}{t}g \leq \mathrm{Ric} \leq \dfrac{K_2}{t}g$, where $K = \max\{K_1^2, K_2^2\}$.
Together with $H^2 \leq \dfrac{K_3}{t}g$, we get
$$\left|H^2\right|^2 \leq \frac{nK_3^2}{t^2}.$$
Noting that $|\nabla^{2}f|^{2} \geq \dfrac{1}{n}(\Delta f)^{2}$, thus we conclude that
\begin{equation}\label{the 2th term}
\begin{aligned}
t\left[2|\nabla^2f|^2+2\alpha\langle \mathrm{Ric}-\frac{1}{4}H^2, \nabla^2f \rangle\right]
\geq \frac{2a\alpha}{n}(\Delta f)^{2} t- \frac{\alpha Kn}{2bt} - \frac{\alpha K_3^2 n}{32bt}.
\end{aligned}
\end{equation}
Since $\mathrm{Ric} \geq -\dfrac{K_{1}}{t}g$ and $H^{2} \leq \dfrac{ K_{3}}{t}g$, if $\alpha >1$, the third term of equation (\ref{laplace-dt of F}) satisfies
\begin{equation}\label{the 3th term}
\begin{aligned}
t\left[2\alpha \mathrm{Ric}(\nabla f, \nabla f)-\frac{\alpha}{2}H^2(\nabla f, \nabla f) + \frac{1}{2}H^2(\nabla f, \nabla f)\right]
\geq -2\alpha K_1|\nabla f|^2-\frac{\alpha-1}{2}K_3|\nabla f|^2.
\end{aligned}
\end{equation}
For the fourth term, using Cauchy inequality, we deduce
\begin{equation*}
\langle \nabla f, \frac{1}{4}\nabla |H|^{2}-\frac{1}{2}\mathrm{div}H^2 \rangle
\geq -\frac{3}{8}|\nabla f|^{2}-\frac{1}{8}|\nabla|H|^2|^{2}-\frac{1}{4}|\mathrm{div}H^{2}|^{2}.
\end{equation*}
Moreover, Lemma \ref{grad(Q)} and direct computation assert that
$$|\nabla |H|^{2}|^2 \leq n|\nabla H^{2}|^{2},$$
$$|\mathrm{div}H^{2}|^2 \leq n|\nabla H^{2}|^2.$$
Together with $ |\nabla H^{2}| \leq K_4$, the fourth term on the right-hand side of equation (\ref{laplace-dt of F}) becomes
\begin{equation}\label{the 4th term}
\begin{aligned}
t\left[\alpha\langle \nabla f, \frac{1}{4}\nabla|H|^{2}-\frac{1}{2}\mathrm{div}H^2\rangle\right]
\geq -\frac{3\alpha}{8} t|\nabla f|^2 - \frac{3n\alpha K_{4}^{2}}{8} t.
\end{aligned}
\end{equation}
Substituting (\ref{the 2th term}), (\ref{the 3th term}), and (\ref{the 4th term}) into (\ref{laplace-dt of F}), we now obtain
\begin{equation}\label{laplace-dt of F again}
\begin{aligned}
(\Delta-\partial_{t})F
\geq &-2 \langle \nabla f, \nabla F \rangle + \frac{2a\alpha t}{n}\left(|\nabla f|^{2}-\partial_{t}f\right)^{2}
-\left(2\alpha K_{1}+\frac{(\alpha-1)K_{3}}{2}\right)|\nabla f|^{2}-\frac{3\alpha}{8}t|\nabla f|^{2}\\
&-\left(|\nabla f|^{2}-\alpha\partial_{t}f\right)
-\left(\frac{n\alpha K}{2b}+\frac{n\alpha K_{3}^{2}}{32b}\right)\frac{1}{t}-\frac{3n\alpha K_{4}^{2}}{8}t.
\end{aligned}
\end{equation}
Following the trick in \cite{cao xiaodong}(\cite{liu shiping}), the equality
$$(y-z)^2 = \frac{1}{\alpha^2}(y-\alpha z)^2+\left(\frac{\alpha-1}{\alpha}\right)^2y^2+\frac{2(\alpha-1)}{\alpha^2}y(y-\alpha z)$$
implies that
$$\left(|\nabla f|^{2}-\partial_{t}f\right)^{2}
=\frac{1}{\alpha^{2}}\left(|\nabla f|^{2}-\alpha\partial_{t}f\right)^{2}+\left(\frac{\alpha-1}{\alpha}\right)^{2}|\nabla f|^{4}
+\frac{2(\alpha-1)}{\alpha^{2}}|\nabla f|^{2}\left(|\nabla f|^{2}-\alpha\partial_{t}f\right).$$
Hence, we arrive at
\begin{equation}\label{transform quadratic}
\begin{aligned}
&\frac{2a\alpha t}{n} \left(|\nabla f|^2 - \partial_t f\right)^2 - \left(2\alpha K_1 + \frac{(\alpha-1)K_{3}}{2} \right) |\nabla f|^2
- \frac{3\alpha}{8} t |\nabla f|^2\\
=&\frac{2a\alpha t}{n}\left[\frac{1}{\alpha^{2}}\left(|\nabla f|^{2}
-\alpha\partial_{t}f\right)^{2}+\left(\frac{\alpha-1}{\alpha}\right)^{2}|\nabla f|^{4}
+\frac{2(\alpha-1)}{\alpha^{2}}|\nabla f|^{2}\left(|\nabla f|^{2}-\alpha\partial_{t}f\right)\right.\\
&\left.-\left(\frac{nK_1}{at}+\frac{n(\alpha-1)K_3}{4a\alpha t}+\frac{n}{16a}\right)|\nabla f|^{2}\right].
\end{aligned}
\end{equation}
According to the fundamental inequality
$$ c_{1}x^2-c_{2}x \geq -\frac{c_{2}^2}{4c_{1}}$$
for any $c_{1},c_{2} > 0$, we have
\begin{equation*}
\begin{aligned}
&\left(\frac{\alpha-1}{\alpha}\right)^{2}|\nabla f|^{4}
-\left(\frac{nK_1}{at}+\frac{n(\alpha-1)K_3}{4a\alpha t}+\frac{n}{16a}\right)|\nabla f|^{2}\\
\geq &-\frac{\alpha^2}{4(\alpha-1)^2}\left(\frac{nK_1}{at}+\frac{n(\alpha-1)K_3}{4a\alpha t}+\frac{n}{16a}\right)^2.
\end{aligned}
\end{equation*}
Applying the above inequality, (\ref{transform quadratic}) becomes
\begin{equation}\label{the key range of a}
\begin{aligned}
&\frac{2a\alpha t}{n} \left(|\nabla f|^2 - \partial_t f\right)^2 - \left(2\alpha K_1
+ \frac{(\alpha-1)K_{3}}{2} \right) |\nabla f|^2 - \frac{3\alpha}{8} t |\nabla f|^2\\
\geq &\frac{2a}{n\alpha} \frac{F^2}{t} - \frac{n\alpha^3}{2a(\alpha-1)^2} \left( K_1 + \frac{(\alpha-1)K_{3}}{4\alpha} \right)^2 \frac{1}{t}
- \frac{n\alpha^3}{512a(\alpha-1)^2} t \\
&- \frac{n\alpha^3}{16a(\alpha-1)^2} \left( K_1 + \frac{(\alpha-1)K_3}{4\alpha} \right),
\end{aligned}
\end{equation}
where we have used $$F=t\left(|\nabla f|^{2}-\alpha\partial_{t}f\right) \geq 0$$ and $$\alpha>1.$$
Therefore, combining the above inequality and (\ref{laplace-dt of F again}), we conclude that
\begin{equation*}
(\Delta-\partial_{t})F
\geq -2\langle \nabla f,\nabla F \rangle + \frac{2a}{n\alpha}\frac{F^{2}}{t}-\frac{F}{t}-B_{1}t-B_{2}\frac{1}{t}-B_{3},
\end{equation*}
where
$$B_{1}=\frac{n\alpha^3}{512a(\alpha-1)^2} + \frac{3n\alpha K_{4}^{2}}{8},$$
$$B_{2}=
\frac{n\alpha^3}{2a(\alpha-1)^2}\left(K_{1}+\frac{(\alpha-1)K_{3}}{4\alpha}\right)^{2} + \frac{n\alpha K}{2b} + \frac{n\alpha K_3^2}{32b},$$
$$B_{3}=\frac{n\alpha^3}{16a(\alpha-1)^2} \left( K_1 + \frac{(\alpha-1)K_3}{4\alpha} \right).$$
Fix $\sigma\in(0,T]$ and choose a point $(x_{0},t_{0})\in \mathbf{M}\times[0,\sigma]$ where $F$ attains its maximum on $\mathbf{M}\times[0,\sigma]$.
Noticing that $F=0$ as $t\rightarrow 0$, it means that the maximum value of \( F \) cannot be achieved at the initial moment by our positive assumption on $F$. Therefore, the following properties holds at the point $(x_{0},t_{0})$
$$\nabla F(x_{0},t_{0})=0,$$
$$\partial_{t}F(x_{0},t_{0}) \geq 0,$$
$$\Delta F(x_{0},t_{0}) \leq 0.$$
Evaluating at $(x_{0},t_{0})$ and using the above properties yields
$$\frac{2a}{n\alpha}\frac{F^{2}}{t}-\frac{F}{t}-B_{1}t-B_{2}\frac{1}{t}-B_{3} \leq 0.$$
Multiplying through by $t$ and the quadratic formula implies that
$$F \leq \frac{n\alpha}{4a}\left[1+\sqrt{1+\frac{8a}{n\alpha}(B_{1}t^{2}+B_{2}+B_{3}t)}~\right],$$
Since $F$ takes its maximum at $(x_{0},t_{0})$, for all $(x,t)\in \mathbf{M}\times [0,\sigma]$, we have the following estimate
$$F(x,t) \leq F(x_{0},t_{0}) \leq \frac{n\alpha}{2a}+\frac{\sqrt{2}}{2}\sqrt{\frac{n\alpha}{a}(B_{1}\sigma^{2}+B_{2}+B_{3}\sigma)}.$$
As $\sigma \in (0,T]$ was chosen arbitrarily, this holds for all $t \in (0,T]$.
According to the definition of $F$, we obtain the desired result
\begin{equation*}
\dfrac{|\nabla u|^{2}}{u^{2}}-\alpha\dfrac{\partial_{t}u}{u}
\leq\dfrac{n\alpha}{2at}+\sqrt{\dfrac{n\alpha B_{1}}{2a}}+\dfrac{1}{t}\sqrt{\dfrac{n\alpha B_{2}}{2a}}+\sqrt{\dfrac{n\alpha B_{3}}{2at}},
\end{equation*}
where $a+2b=\dfrac{1}{\alpha}$, $\alpha>1$.

\end{proof}

\begin{remark}
In particular, if $a=2b=\dfrac{1}{2\alpha}$, then the estimate becomes
\begin{equation}\label{a=2b}
\begin{aligned}
\dfrac{|\nabla u|^{2}}{u^{2}}-\alpha\dfrac{\partial_{t}u}{u}
\leq &\dfrac{n\alpha^{2}}{t}
+\sqrt{\frac{n^2 \alpha^6}{256(\alpha-1)^2} + \frac{3n^{2}\alpha^{3}K_{4}^{2}}{8}}\\
&+\dfrac{1}{t}\sqrt{\frac{n^2 \alpha^6}{(\alpha-1)^2}\left(K_{1}+\frac{(\alpha-1)K_{3}}{4\alpha}\right)^{2} + 2n^2 \alpha^4 K
+ \frac{n^2 \alpha^4 K_3^2}{8}}\\
&+\frac{1}{\sqrt{t}}\sqrt{\frac{n^2 \alpha^6}{8(\alpha-1)^2} \left( K_1 + \frac{(\alpha-1)K_3}{4\alpha} \right)},
\end{aligned}
\end{equation}
where $\alpha>1$. This shows that estimate (\ref{a=2b}) is an extension of Theorem 2 in \cite{liu shiping}.
\end{remark}

Next, we prove the Harnack-hype inequality.

\begin{proof}[\bf{Proof of Corollary \ref{Harnack}}]

From Theorem \ref{gradient estimates 1-1}, we have
\begin{equation}\label{gradient estimates corollary 3.1}
\begin{aligned}
\dfrac{|\nabla u|^{2}}{u^{2}}-\alpha\dfrac{\partial_{t}u}{u}
\leq \xi\frac{1}{t}+\frac{1}{\sqrt{t}}\sqrt{\frac{n\alpha}{2a}B_{3}}+\sqrt{\frac{n\alpha}{2a}B_{1}},
\end{aligned}
\end{equation}
where $$\xi=\frac{n\alpha}{2a}+\sqrt{\frac{n\alpha}{2a}B_{2}}.$$
Choosing a geodesic curve $\gamma(s):[0,1] \rightarrow \mathrm{\bf{M}}$ connects $x$ and $y$ with $\gamma(0)=y$ and $\gamma(1)=x$. Define
$$\eta(s)=\ln u(\gamma(s),(1-s)t_{2}+st_{1}),$$
then we obtain
$$\eta(0)=\ln u(y,t_{2}),\quad \eta(1)=\ln u(x,t_{1}).$$
Direct calculation gives
$$\partial_{s}\eta(s)=(t_{2}-t_{1})\left(\frac{1}{t_{2}-t_{1}}\langle \nabla\ln u, \gamma'(s) \rangle -(\ln u)_{t}\right),$$
where $t=(1-s)t_{2}+st_{1}$. The Cauchy inequality implies
$$\frac{1}{t_{2}-t_{1}}\langle \nabla\ln u, \gamma'(s) \rangle
\leq \frac{|\nabla u|^{2}}{\alpha u^{2}}+\frac{\alpha|\gamma'(s)|^{2}}{4(t_{2}-t_{1})^{2}}.$$
Combining the above inequality and (\ref{gradient estimates corollary 3.1}), we conclude that
$$\partial_{s}\eta(s) \leq \frac{\alpha|\gamma'(s)|^{2}}{4(t_{2}-t_{1})}
+\frac{t_{2}-t_{1}}{\alpha}\left[\xi\frac{1}{t}+\frac{1}{\sqrt{t}}\sqrt{\frac{n\alpha}{2a}B_{3}}+\sqrt{\frac{n\alpha}{2a}B_{1}}~\right],$$
where $t=(1-s)t_{2}+st_{1}$ and $|\gamma'(s)|$ is the length of the vector $\gamma'(s)$ at $t$.
Integrating this inequality over $\gamma'(s)$, we get
\begin{equation*}
\begin{aligned}
\ln\left(\frac{u(x,t_{1})}{u(y,t_{2})}\right)
&=\int_{0}^{1} \partial_{s}\eta(s) ds\\
& \leq \int_{0}^{1} \frac{\alpha|\gamma'(s)|^{2}}{4(t_{2}-t_{1})} ds
+\frac{\xi}{\alpha}\ln \left(\frac{t_{2}}{t_{1}}\right)+\frac{2}{\alpha}\sqrt{\frac{n\alpha}{2a}B_{3}}\left(\sqrt{t_{2}}-\sqrt{t_{1}}\right)
+\frac{t_{2}-t_{1}}{\alpha}\sqrt{\frac{n\alpha}{2a}B_{1}}.
\end{aligned}
\end{equation*}
The corollary follows by exponentiating both sides.
\end{proof}

At the end of this section, we provide the proof of the Hamilton-type inequality.

\begin{proof}[\bf{Proof} of Theorem \ref{gradient estimates 1-2}]
By calculating, we get
$$(\partial_{t}-\Delta)\left(u\ln\left(\frac{A}{u}\right)\right)=\frac{|\nabla u|^{2}}{u}.$$
Using the Bochner technique with the flow equation, we deduce
$$(\partial_{t}-\Delta)\left(\frac{|\nabla u|^{2}}{u}\right)
=-\frac{2}{u}\left|u_{ki}-\frac{u_{k}u_{i}}{u}\right|^{^{2}}-\frac{1}{2u}|i_{\nabla u}H|^{2}.$$
Denote$$P=t\frac{|\nabla u|^{2}}{u}-u\ln\left(\frac{A}{u}\right),$$ then
\begin{equation*}
\begin{aligned}
(\partial_{t}-\Delta)P
&=t\left(-\frac{2}{u}\left|u_{ki}-\frac{u_{k}u_{i}}{u}\right|^{^{2}}-\frac{1}{2u}|i_{\nabla u}H|^{2}\right)\\
& \leq 0.
\end{aligned}
\end{equation*}
Note that $P \leq 0$ as $t=0$, according to the maximum principle, we obtain $P \leq 0$ for all time. Then we prove this theorem.

\end{proof}

\section{Parabolic frequency on generalized Ricci flow}\label{section 4}
In this section, we first calculate the conjugate heat equation under the generalized flow (\ref{definition of generalized Ricii flow}).

For any time-dependent smooth function $f(t)$ on $\mathrm{\bf{M}}$, we denote $$\textbf{K}(t)=(4\pi(T-t))^{-\frac{n}{2}}e^{-f(t)}$$ as the positive solution of the conjugate heat equation
$$\partial_{t}\textbf{K}=-\Delta_{g(t)}\textbf{K}+R\textbf{K}-\frac{1}{4}|H|_{g(t)}^{2}\textbf{K}.$$
From the definition of $\textbf{K}(t)$, we can derive that the smooth function $f(t)$ satisfies the following equation
$$\partial_{t}f=-\Delta_{g(t)}f+|\nabla_{g(t)}f|_{g(t)}^{2}-R+\frac{1}{4}|H|_{g(t)}^{2}+\frac{n}{2(T-t)}.$$
We define the weighted measure as follows:
$$d\mu_{g(t)}:=\textbf{K}dV_{g(t)}=(4\pi(T-t))^{-\frac{n}{2}}e^{-f(t)}dV_{g(t)},~\int_{\mathrm{\bf{M}}}d\mu_{g(t)}=1.$$
Under the generalized Ricci flow (\ref{definition of generalized Ricii flow}), the volume form $dV_{g(t)}$ satisfies $$\partial_{t}(dV_{g(t)})=\left(-R+\frac{1}{4}|H|_{g(t)}^{2}\right)dV_{g(t)}.$$
Thus, the evolution of the conjugate heat kernel measure $d\mu_{g(t)}$ is given by
\begin{equation}\label{volume element}
\begin{aligned}
\partial_{t}(d\mu_{g(t)})=-\left(\Delta_{g(t)}\textbf{K}\right)dV_{g(t)}=-\frac{\Delta_{g(t)}\textbf{K}}{\textbf{K}}d\mu_{g(t)}.
\end{aligned}
\end{equation}

Then using the Li-Yau-type gradient estimate and the Hamilton-type gradient estimate, we study the parabolic frequency of the solution to the heat equation (\ref{definition of heat eq.}) under the generalized Ricci flow (\ref{definition of generalized Ricii flow}) with bounded Ricci curvature.

For a function $u: \mathrm{\bf{M}}\times[t_{0},t_{1}] \rightarrow \mathbb{R}_{+}$ and for all $t \in [t_{0},t_{1}] \subset (0,T)$, we define
$$I(t)=\int_{\mathrm{\bf{M}}} u^{2} d\mu_{g(t)},$$
$$D(t)=h(t)\int_{\mathrm{\bf{M}}} |\nabla_{g(t)} u|^{2}_{g(t)} d\mu_{g(t)},$$
$$U(t)=\mathrm{exp}\{E(t)\}\frac{D(t)}{I(t)},$$
where
\begin{equation}\label{frequency E(t)}
\begin{aligned}
E(t)=-\int_{t_{0}}^{t} \left(\frac{h'(s)}{h(s)}+\frac{4n}{s}+\frac{1}{s}\ln\left(\frac{A}{\kappa}\right)+\sqrt{4nC_{1}}
+\frac{\sqrt{4nC_{2}}}{s}+\frac{\sqrt{4nC_{3}}}{\sqrt{s}}+\frac{nc(s)}{2}\right) ds,
\end{aligned}
\end{equation}
$$C_{1}=\frac{1}{16} n + \frac{3}{4}nK_{4}^{2},$$
$$C_{2}=16n\left(K_1 + \frac{K_{3}}{8}\right)^{2} + 8nK + \frac{nK_3^2}{2},$$
$$C_{3}=2n \left( K_1 + \frac{K_3}{8} \right), \quad K=\mathrm{max}\{K_{1}^{2},K_{2}^{2}\}.$$
$$A=\mathrm{max}_{\mathrm{\bf{M}}}u(\cdot,0), \quad \kappa=\mathrm{min}_{\mathrm{\bf{M}}}u(\cdot,0), \quad c(t)=\frac{1}{t}\ln\left(\frac{A}{\kappa}\right),$$
the constants $K_{1},K_{2},K_{3},K_{4},K_{5}$ are as shown in Theorem \ref{gradient estimates 1-1}, and $h(t)$ is a time-dependent function.

In what follows, we will omit the subscript $g(t)$ in $I(t)$ and denote it simply as $I(t)=\int_{\mathrm{\bf{M}}} u^{2} d\mu$. Similar simplifications will be applied to other integrals where no confusion arises.

We now proceed to present the proof of the monotonicity of $U(t)$.

\begin{proof}[\bf{Proof of Theorem \ref{frequency 1-1}}]
Note that
$$U'(t) = E'(t) U(t) + \mathrm{exp}\{E(t)\} \frac{I(t) D'(t) - D(t) I'(t)}{I^2(t)}.$$
Hence, we need to calculate the derivative of $I(t)$ and $D(t)$.
From a direct calculation, it can be seen that
$$I'(t) = \frac{d}{dt}\left(\int_\mathrm{\bf{M}} u^2 \, d\mu\right)
= \int_\mathrm{\bf{M}} \left(2u\partial_{t}u-\Delta u^{2}\right) d\mu, $$
where we have used the divergence Theorem and (\ref{volume element}).
Using the equality $$\Delta u^{2}=2u\Delta u+2|\nabla u|^{2},$$
we obtain
\begin{equation}\label{equality I'(t)}
\begin{aligned}
I'(t)= \int_\mathrm{\bf{M}} 2\left(u\partial_{t}u-\frac{1}{2}|\nabla u|^{2}\right) d\mu - 2 \int_\mathrm{\bf{M}} u \Delta u d\mu
- \int_\mathrm{\bf{M}} |\nabla u|^2 d\mu.
\end{aligned}
\end{equation}
Taking $\alpha = 2$, $a = \dfrac{1}{4}$, $b = \dfrac{1}{8}$ in Theorem \ref{gradient estimates 1-1}, we get
\begin{equation}\label{frequency-estimate 1}
\begin{aligned}
\frac{|\nabla u|^{2}}{u^{2}}-2\frac{\partial_{t}u}{u} \leq \frac{4n}{t} + \sqrt{4nC_1}
+ \frac{\sqrt{4nC_2}}{t} + \frac{\sqrt{4nC_3}}{\sqrt{t}},
\end{aligned}
\end{equation}
where $$C_{1}=\frac{1}{16} n + \frac{3}{4}n K_4^2,$$
$$C_{2}=16n\left(K_1 + \frac{K_{3}}{8}\right)^{2} + 8nK + \frac{nK_3^2}{2},$$
$$C_{3}=2n \left( K_1 + \frac{K_3}{8} \right), \quad K=\mathrm{max}\{K_{1}^{2},K_{2}^{2}\}.$$
According to Theorem \ref{gradient estimates 1-2} and $\kappa \leq u(\cdot,0) \leq A$, we have
\begin{equation}\label{frequency-estimate 2}
\begin{aligned}
\int_\mathrm{\bf{M}} |\nabla u|^2 d\mu
\leq \frac{1}{t}\ln\left(\frac{A}{\kappa}\right)I(t).
\end{aligned}
\end{equation}
Combining (\ref{equality I'(t)}) with (\ref{frequency-estimate 1}) and (\ref{frequency-estimate 2}), we conclude that
\begin{equation}\label{inequality of I'(t)}
\begin{aligned}
I'(t) \geq -\mathcal{C}(t)\cdot I(t)- \frac{2}{nc(t)} \int_\mathrm{\bf{M}} (\Delta u)^{2} d\mu
\end{aligned}
\end{equation}
for $$\mathcal{C}(t)=\frac{4n}{t}+\frac{1}{t}\ln\left(\frac{A}{\kappa}\right)+ \sqrt{4nC_1} + \frac{\sqrt{4nC_2}}{t}+ \frac{\sqrt{4nC_3}}{\sqrt{t}} + \frac{n c(t)}{2},$$
where we have used the fundamental inequality
$$u\Delta u \leq \frac{nc(t)}{2}u^{2}+\frac{2}{nc(t)}(\Delta u)^{2},$$
and $c(t)$ is a time-dependent function to be determined later.

A straightforward calculation shows that
\begin{equation}\label{equality D'(t)}
\begin{aligned}
D'(t) =& \frac{d}{dt} \left[ h(t) \int_\mathrm{\bf{M}} |\nabla u|_{g(t)}^2 d\mu \right]\\
=& h'(t) \int_\mathrm{\bf{M}} |\nabla u|^2 d\mu + h(t) \int_\mathrm{\bf{M}} (\partial_{t} - \Delta) (|\nabla u|^2) d\mu.
\end{aligned}
\end{equation}
Using Bochner formula and generalized Ricci flow equation yields
$$(\partial_{t} - \Delta)(|\nabla u|^2)=-\frac{1}{2}|i_{\nabla u}H|^{2}-2|\nabla^{2}u|^{2}.$$
Substituting the above identity into (\ref{equality D'(t)}), we have
\begin{equation}\label{last equality D'(t)}
D'(t)= h'(t) \int_\mathrm{\bf{M}} |\nabla u|^2 d\mu -2 h(t) \int_\mathrm{\bf{M}} |\nabla^{2}u|^{2} d\mu
-\frac{1}{2}h(t) \int_\mathrm{\bf{M}} |i_{\nabla u}H|^{2} d\mu.
\end{equation}
If $h(t)<0$, then the equality (\ref{last equality D'(t)}) implies that
\begin{equation*}
D'(t) \geq h'(t) \int_\mathrm{\bf{M}} |\nabla u|^2 d\mu - 2 h(t) \int_\mathrm{\bf{M}} |\nabla^2 u|^2 d\mu.
\end{equation*}
Taking the above inequality together with (\ref{inequality of I'(t)}), (\ref{frequency-estimate 2}) and
$$|\nabla^2 u|^2 \geq \frac{1}{n}(\Delta u)^{2},$$
we obtain the estimate
\begin{equation*}
\begin{aligned}
I^2(t)U'(t)
\geq &~ \mathrm{exp}\{E(t)\} \left\{ I(t)[E'(t)h(t) + h'(t)] \int_\mathrm{\bf{M}} |\nabla u|^2 d\mu
+ I(t)h(t)\mathcal{C}(t)\int_\mathrm{\bf{M}} |\nabla u|^2 d\mu \right.\\
&\left.- \left[\frac{2}{n} I(t) h(t)
- \frac{2 h(t) I(t)}{t n c(t)} \ln\left(\frac{A}{\kappa}\right) \int_\mathrm{\bf{M}} (\Delta u)^2 d\mu \right]\right\}\\
=&~0
\end{aligned}
\end{equation*}
where we let $$c(t)=\frac{1}{t}\ln\left(\dfrac{A}{\kappa}\right).$$

On the other hand, if $h(t)>0$, then the equality (\ref{last equality D'(t)}) implies that
\begin{equation*}
D'(t) \leq h'(t) \int_\mathrm{\bf{M}} |\nabla u|^2 d\mu - 2 h(t) \int_\mathrm{\bf{M}} |\nabla^2 u|^2 d\mu.
\end{equation*}
Similarly, taking the this inequality together with (\ref{inequality of I'(t)}), we obtain
\begin{equation*}
\begin{aligned}
I^2(t)U'(t)
\leq ~& \mathrm{exp}\{E(t)\} \left[ E'(t) I(t) h(t) \int_\mathrm{\bf{M}} |\nabla u|^2 d\mu
+ I(t) h'(t) \int_\mathrm{\bf{M}} |\nabla u|^2 d\mu\right.\\
&\left.- 2 I(t) h(t) \int_\mathrm{\bf{M}} |\nabla^2 u|^2 d\mu
+ h(t)\mathcal{C}(t) I(t) \int_\mathrm{\bf{M}} |\nabla u|^2 d\mu\right.\\
&\left.+\frac{2h(t)}{n c(t)} \int_\mathrm{\bf{M}} (\Delta u)^2 d\mu \cdot \int_\mathrm{\bf{M}} |\nabla u|^2 d\mu\right].
\end{aligned}
\end{equation*}
Again applying (\ref{frequency-estimate 2}), the definition of $E(t)$ and $$|\nabla^2 u|^2 \geq \frac{1}{n}(\Delta u)^{2},$$
we find the estimate
\begin{equation*}
\begin{aligned}
I^2(t)U'(t)
\leq &~ \mathrm{exp}\{E(t)\} \left[-\frac{2}{n} I(t)h(t)\int_\mathrm{\bf{M}} (\Delta u)^2 d\mu
+ \frac{2 h(t) I(t)}{t n c(t)} \ln\left(\frac{A}{\kappa}\right) \int_\mathrm{\bf{M}} (\Delta u)^2 d\mu \right]\\
=&~0
\end{aligned}
\end{equation*}
where we let $$c(t)=\frac{1}{t}\ln\left(\dfrac{A}{\kappa}\right).$$
Therefore the result follows.

\end{proof}

We define the first non-zero eigenvalue of the manifold $\mathrm{\bf{M}}$ under the generalized Ricci flow with the weighted measure $d\mu_{g(t)}$ as
$$\lambda_{\mathrm{\bf{M}}}(t)
=\mathrm{inf}\left\{\frac{\int_{\mathrm{\bf{M}}} |\nabla_{g(t)}u|_{g(t)}^{2} d\mu_{g(t)}}{\int_{\mathrm{\bf{M}}} u^{2} d\mu_{g(t)}}\bigg|
0<u \in C^{\infty}(\mathrm{\bf{M}})\backslash\{0\}\right\}.$$
Consequently, we can derive the following Corollary from Theorem \ref{frequency 1-1}.
\begin{corollary}
Under the same hypotheses as Theorem \ref{frequency 1-1}, then for any $t \in [t_{0},t_{1}] \subset (0,T)$,we have the following.
\begin{subequations}
\begin{align*}
     &\text{(i) if $h(t)<0$, then $\beta(t)h(t)\lambda_{\mathrm{\bf{M}}}(t)$ is monotone increasing along the generalized Ricci flow.}\\
     &\text{(ii) if $h(t)>0$, then $\beta(t)h(t)\lambda_{\mathrm{\bf{M}}}(t)$ is monotone decreasing along the generalized Ricci flow.}
\end{align*}
\end{subequations}
where $\beta(t)=\mathrm{exp}\{E(t)\}$ and $E(t)$ is as shown in (\ref{frequency E(t)}).
\end{corollary}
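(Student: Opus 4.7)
The plan is to combine the monotonicity of $U(t)$ from Theorem \ref{frequency 1-1} with the variational characterization of $\lambda_{\mathbf{M}}(t)$, in the standard way one upgrades a differential frequency bound to an eigenvalue monotonicity. The key observation is that for any positive smooth function $u$ on $\mathbf{M}$,
$$\frac{\int_{\mathbf{M}} |\nabla u|_{g(t)}^{2}\,d\mu_{g(t)}}{\int_{\mathbf{M}} u^{2}\,d\mu_{g(t)}} \;\geq\; \lambda_{\mathbf{M}}(t),$$
with equality attained by a positive first eigenfunction. Multiplying this inequality by $h(t)$ reverses its sense when $h(t)<0$ and preserves it when $h(t)>0$, while multiplying by $\beta(t)=\exp\{E(t)\}>0$ always preserves it. Hence $U(t)$ bounds $\beta(t)h(t)\lambda_{\mathbf{M}}(t)$ from one side, with the side dictated by $\mathrm{sign}\,h(t)$, and one has equality at any time $t$ where $u(\cdot,t)$ is itself a first eigenfunction.

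Fix arbitrary $s_{0}<s_{1}$ in $[t_{0},t_{1}]$. I would produce a positive smooth first eigenfunction $\phi$ of the weighted Laplacian at time $s_{0}$, whose existence is guaranteed by spectral theory on the closed manifold $(\mathbf{M}, g(s_{0}), d\mu_{g(s_{0})})$, and then extend $\phi$ to a positive smooth solution $u$ of the heat equation (\ref{definition of heat eq.}) with $u(\cdot,s_{0})=\phi$, taking $A=\max_{\mathbf{M}}\phi$ and $\kappa=\min_{\mathbf{M}}\phi$ to feed the Hamilton-type estimate from Theorem \ref{gradient estimates 1-2} and to fix the constants entering $E(t)$. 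Applying Theorem \ref{frequency 1-1} with this $u$ yields the monotonicity of $U(t)$ on the relevant time-interval in the direction determined by the sign of $h$.

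Next I would simply compare $U(s_{0})$ with $U(s_{1})$. By construction $U(s_{0})=\beta(s_{0})h(s_{0})\lambda_{\mathbf{M}}(s_{0})$ exactly, while the Rayleigh inequality at $s_{1}$ together with the sign of $h(s_{1})$ gives $U(s_{1})\leq\beta(s_{1})h(s_{1})\lambda_{\mathbf{M}}(s_{1})$ when $h<0$ and $U(s_{1})\geq\beta(s_{1})h(s_{1})\lambda_{\mathbf{M}}(s_{1})$ when $h>0$. Chaining these with the monotonicity of $U$, Case (i) ($h<0$, $U$ increasing) yields
$$\beta(s_{0})h(s_{0})\lambda_{\mathbf{M}}(s_{0})=U(s_{0})\leq U(s_{1})\leq \beta(s_{1})h(s_{1})\lambda_{\mathbf{M}}(s_{1}),$$
and Case (ii) ($h>0$, $U$ decreasing) yields the reverse chain. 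Since $s_{0}<s_{1}$ were arbitrary in $[t_{0},t_{1}]$, this is exactly the monotonicity asserted.

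The main obstacle is the step in which the eigenfunction $\phi$, chosen at $s_{0}>0$, must be realised as the time-$s_{0}$ value of a heat solution defined back to $t=0$, since the heat equation under the generalized Ricci flow is backward ill-posed in general. My workaround is to rerun the proof of Theorem \ref{frequency 1-1} on the subinterval $[s_{0},t_{1}]$ instead of $[t_{0},t_{1}]$, treating $s_{0}$ as the new reference time: the gradient estimates of Theorems \ref{gradient estimates 1-1}--\ref{gradient estimates 1-2} only require $u$ to be a positive solution on the interval under consideration together with the endpoint bounds $\kappa\leq u\leq A$ for the Hamilton step, and $E(t)$ may be redefined as the integral of the same integrand from $s_{0}$ to $t$. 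This produces the desired monotonicity of $U$ on $[s_{0},s_{1}]$, from which the eigenvalue monotonicity follows as above.
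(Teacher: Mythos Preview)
The paper supplies no proof of this corollary beyond the one-line assertion that it follows from Theorem~\ref{frequency 1-1}, so there is no detailed argument to compare against. Your eigenfunction strategy is the standard one in the parabolic-frequency literature, but in the present setting it has two genuine gaps.

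First, there is no positive first eigenfunction attached to a nonzero $\lambda_{\mathbf{M}}(s_{0})$. On a closed manifold any eigenfunction of the weighted Laplacian for the first \emph{nonzero} eigenvalue is $L^{2}(d\mu)$-orthogonal to the constants and must change sign; spectral theory does not hand you a positive $\phi$. Since Theorems~\ref{gradient estimates 1-1} and~\ref{gradient estimates 1-2}, and hence Theorem~\ref{frequency 1-1}, genuinely require $u>0$ (the Li--Yau quantity involves $\ln u$, and the Hamilton estimate involves $\ln(A/u)$), a sign-changing initial datum cannot be fed into the machine. If instead one reads the paper's definition of $\lambda_{\mathbf{M}}$ literally, the infimum over positive $u$ is achieved by constants and equals $0$, the corollary is vacuous, and your chain collapses to $0\le 0$.

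Second, even granting a positive minimiser, your workaround of restarting at $s_{0}$ changes $\beta$. The integrand in (\ref{frequency E(t)}) depends on $\ln(A/\kappa)$ through two separate terms, and the $1/t$ factor in Theorem~\ref{gradient estimates 1-2} becomes $1/(t-s_{0})$ once the heat flow is launched from $s_{0}$ with data $\phi$; likewise the $1/t$ contributions in Theorem~\ref{gradient estimates 1-1} arising from $F=t(\cdots)$ shift to $1/(t-s_{0})$ while those coming from the curvature hypotheses $-K_{1}g/t\le\mathrm{Ric}\le K_{2}g/t$ do not. So what your argument produces is the monotonicity of $\beta_{s_{0}}(t)h(t)\lambda_{\mathbf{M}}(t)$ for an $s_{0}$-dependent weight $\beta_{s_{0}}$ built from $A'=\max\phi$, $\kappa'=\min\phi$ and a shifted lower limit, not of the fixed $\beta(t)=\exp\{E(t)\}$ inherited from the hypotheses of Theorem~\ref{frequency 1-1}. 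The comparison $\beta(s_{0})h(s_{0})\lambda_{\mathbf{M}}(s_{0})\le\beta(s_{1})h(s_{1})\lambda_{\mathbf{M}}(s_{1})$ therefore does not follow from your chain.
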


\begin{corollary}\label{integral Harnack}
Under the same hypotheses as Theorem \ref{frequency 1-1}, then we have
$$I(t_{1}) \geq \mathrm{exp}\left\{2U(t_{0})\int_{t_{0}}^{t_{1}} -\frac{1}{h(t)\beta(t)} dt\right\}I(t_{0})$$
for any $t \in [t_{0},t_{1}] \subset (0,T)$.
\end{corollary}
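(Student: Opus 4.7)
The plan is to show that the logarithmic derivative of $I(t)$ is exactly $-\tfrac{2U(t)}{h(t)\beta(t)}$, and then invoke the monotonicity of $U(t)$ from Theorem \ref{frequency 1-1} to replace $U(t)$ by $U(t_0)$ under the integral.

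First I would establish the clean identity
\begin{equation*}
I'(t)\;=\;-2\int_{\mathrm{\bf{M}}}|\nabla u|^{2}\,d\mu_{g(t)}.
\end{equation*}
This comes from the evolution of the conjugate-heat measure. Since $\partial_{t}(d\mu_{g(t)})=-\tfrac{\Delta \mathbf{K}}{\mathbf{K}}\,d\mu_{g(t)}$ (recorded in (\ref{volume element})) and since $u$ solves the heat equation, one has $(\partial_{t}-\Delta)(u^{2})=-2|\nabla u|^{2}$. Then
\begin{equation*}
I'(t)=\int_{\mathrm{\bf{M}}}\partial_{t}(u^{2})\,\mathbf{K}\,dV-\int_{\mathrm{\bf{M}}}u^{2}\,\Delta\mathbf{K}\,dV
=\int_{\mathrm{\bf{M}}}(\partial_{t}-\Delta)(u^{2})\,d\mu_{g(t)}=-2\int_{\mathrm{\bf{M}}}|\nabla u|^{2}\,d\mu_{g(t)},
\end{equation*}
where the integration by parts is legal on the closed manifold $\mathrm{\bf{M}}$. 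Equivalently, one can plug $\partial_{t}u=\Delta u$ into (\ref{equality I'(t)}) and simplify.

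Using the definition $U(t)=\beta(t)\,h(t)\,\dfrac{\int_{\mathrm{\bf{M}}}|\nabla u|^{2}\,d\mu_{g(t)}}{I(t)}$, the identity above becomes
\begin{equation*}
\frac{d}{dt}\ln I(t)\;=\;\frac{I'(t)}{I(t)}\;=\;-\frac{2U(t)}{h(t)\,\beta(t)}.
\end{equation*}
The step requiring care is the sign bookkeeping in the next reduction. In case (i), $h(t)<0$ forces $-\tfrac{1}{h(t)\beta(t)}>0$, while Theorem \ref{frequency 1-1} gives $U(t)\geq U(t_{0})$; multiplying preserves the inequality, so $-\tfrac{2U(t)}{h(t)\beta(t)}\geq -\tfrac{2U(t_{0})}{h(t)\beta(t)}$. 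In case (ii), $h(t)>0$ gives $-\tfrac{1}{h(t)\beta(t)}<0$, and $U(t)\leq U(t_{0})$; multiplying by a negative factor reverses the inequality, so again $-\tfrac{2U(t)}{h(t)\beta(t)}\geq -\tfrac{2U(t_{0})}{h(t)\beta(t)}$. In both cases one ends up with
\begin{equation*}
\frac{d}{dt}\ln I(t)\;\geq\;2U(t_{0})\cdot\Bigl(-\frac{1}{h(t)\,\beta(t)}\Bigr).
\end{equation*}

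Finally, integrating this differential inequality from $t_{0}$ to $t_{1}$ yields
\begin{equation*}
\ln\frac{I(t_{1})}{I(t_{0})}\;\geq\;2U(t_{0})\int_{t_{0}}^{t_{1}}-\frac{1}{h(t)\,\beta(t)}\,dt,
\end{equation*}
and exponentiating gives the claimed integral Harnack inequality. The only subtlety in the whole argument is the sign analysis that unifies cases (i) and (ii); no further input from the curvature hypotheses or the gradient estimates is needed here beyond what is already packaged into Theorem \ref{frequency 1-1}.
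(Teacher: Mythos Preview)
Your proof is correct and follows essentially the same route as the paper: both compute $I'(t)=-2\int_{\mathrm{\bf{M}}}|\nabla u|^{2}\,d\mu$, rewrite this as $\frac{d}{dt}\ln I(t)=-\frac{2U(t)}{h(t)\beta(t)}$, and then invoke the monotonicity of $U(t)$ from Theorem~\ref{frequency 1-1} with the same sign analysis in cases (i) and (ii). The only cosmetic difference is that the paper first integrates the identity and then applies the monotonicity under the integral, whereas you pass to the differential inequality before integrating; the content is identical.
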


\begin{proof}[\bf{Proof}]
According to the definition of $I(t)$ and $D(t)$, we can obtain
$$I'(t)=\frac{d}{dt}\left(\int_{\mathrm{\bf{M}}} u^{2 }d\mu\right)
=\int_{\mathrm{\bf{M}}} \left(2u\partial_{t}u-\Delta u^{2}\right)d\mu=-2\frac{D(t)}{h(t)}.$$
Therefore
$$\frac{d}{dt}[\ln(I(t))]=\frac{I'(t)}{I(t)}=-\frac{2U(t)}{h(t)\beta(t)}.$$
Integrating the above equality from $t_{0}$ to $t_{1}$, we have
\begin{equation}\label{4-4}
\ln(I(t_{1}))-\ln(I(t_{0}))=2\int_{t_{0}}^{t_{1}} -\frac{U(t)}{h(t)\beta(t)} dt.
\end{equation}
If $h(t)<0$, it follows from $\text{(i)}$ of Theorem \ref{frequency 1-1} that $U(t) \geq U(t_{0})$. Thus, using the equality (\ref{4-4}), we
get $$\ln(I(t_{1}))-\ln(I(t_{0})) \geq 2U(t_{0})\int_{t_{0}}^{t_{1}} -\frac{1}{h(t)\beta(t)} dt.$$
If $h(t)>0$, it follows from $\text{(ii)}$ of Theorem \ref{frequency 1-1} that $U(t) \leq U(t_{0})$. Thus, using the equality (\ref{4-4}), we
get $$\ln(I(t_{1}))-\ln(I(t_{0})) \geq 2U(t_{0})\int_{t_{0}}^{t_{1}} -\frac{1}{h(t)\beta(t)} dt.$$
The corollary follows by exponentiating both sides.
\end{proof}



\begin{thebibliography}{99}

\bibitem{Almgren}
Almgren, F. J.
``Dirichlet's problem for multiple valued functions and the regularity of mass minimizing integral currents."
In {\it Minimal Submanifolds and Geodesics},
1-6. Amsterdam-New York: North-Holland, 1979.

\bibitem{Ricci Harmonic Flow}
B\u{a}ile\c{s}teanu, M.
``Gradient estimates for the heat equation under the Ricci-harmonic map flow."
{\it Adv. Geom.}
15, no. 4 (2015): 445-454.

\bibitem{cao xiaodong}
B\u{a}ile\c{s}teanu, M., X. D. Cao, and A. Pulemotov.
``Gradient estimates for the heat equation under the Ricci flow."
{\it J. Funct. Anal.}
258, no. 10 (2010): 3517-3542.

\bibitem{Baldauf and Kim-Ricci flow}
Baldauf, J., and D. Kim.
``Parabolic frequency on Ricci flows."
{\it Int. Math. Res. Not. IMRN}
no. 12 (2023): 10098-10114.

\bibitem{Baldauf mean curvature flow}
Baldauf, J., P. T. Ho, and T.-K. Lee.
``Parabolic frequency for the mean curvature flow."
{\it Int. Math. Res. Not. IMRN}
no. 10 (2024): 8122-8136.

\bibitem{Hamilton's Ricci Flow}
Chow, B., P. Lu, and L. Ni.
{\it Hamilton's Ricci flow}.
Grad. Stud. Math., 77
American Mathematical Society, Providence, RI; Science Press Beijing, New York, 2006.

\bibitem{Colding and Minicozzi 1997}
Colding, T. H., and W. P. Minicozzi.
``Harmonic functions with polynomial growth."
{\it J. Differential Geom.}
46, no. 1 (1997): 1-77.

\bibitem{Colding and Minicozzi 2022}
Colding, T. H., and W. P. Minicozzi.
``Parabolic frequency on manifolds."
{\it Int. Math. Res. Not. IMRN}
no. 15 (2022): 11878-11890.

\bibitem{Garcia-generalized geometry}
Garcia-Fernandez, M.
``Ricci flow, Killing spinors, and T-duality in generalized geometry."
{\it Adv. Math.}
350, (2019): 1059-1108.

\bibitem{Streets-book}
Garcia-Fernandez, M., and J. Streets.
{\it Generalized Ricci flow}.
Univ. Lecture Ser., 76
American Mathematical Society, Providence, RI, 2021.

\bibitem{Garofalo and Lin Fanghua}
Garofalo, N., and F.-H. Lin.
``Monotonicity properties of variational integrals, $A_{p}$ weights and unique continuation."
{\it Indiana Univ. Math. J.}
35, no. 2 (1986): 245-268.

\bibitem{Garofalo and Lin Fanghua 1987}
Garofalo, N., and F.-H. Lin.
``Unique continuation for elliptic operators: a geometric-variational approach."
{\it Comm. Pure Appl. Math.}
40, no. 3 (1987): 347-366.

\bibitem{Hamilton 1993}
Hamilton, R. S.
``A matrix Harnack estimate for the heat equation."
{\it Comm. Anal. Geom.}
1, no. 1 (1993): 113-126.

\bibitem{Han Qing and Lin Fanghua}
Han, Q., R. Hardt, and F.-H. Lin.
`` Geometric measure of singular sets of elliptic equations."
{\it Comm. Pure Appl. Math.}
51, no. 11-12 (1998): 1425-1443.

\bibitem{entropy functional-Japan}
Ishida, M.
`` On the shrinking entropy functional for generalized Ricci flow."
{\it J. Geom. Anal.}
35, no. 5 (2025): 45, Paper No. 148.

\bibitem{Streets-Bochners formula}
Kopfer, E., and J. Streets.
`` Geometric measure of singular sets of elliptic equations."
{\it J. Funct. Anal.}
284,no. 10 (2023): 42, Paper No. 109901.

\bibitem{Li Yi Ricci and Ricci harmonic flow}
Li, C. H., Y. Li, and K. R. Xu.
``Parabolic frequency monotonicity on Ricci flow and Ricci-harmonic flow with bounded curvatures."
{\it J. Geom. Anal.}
33, no. 9 (2023): 21, Paper No. 282.

\bibitem{Li Yi-G2 flow}
Li, C. H., Y. Li, and K. R. Xu.
``Gradient estimates and parabolic frequency under the Laplacian $G_\text{2}$ flow."
{\it Calc. Var. Partial Differential Equations}
64, no. 4 (2025): 28, Paper No. 121.

\bibitem{Peter Li and Yau}
Li, P., and S.-T. Yau.
``On the parabolic kernel of the Schr\"{o}dinger operator."
{\it Acta Math.}
156, no. 3-4 (1986): 153-201.

\bibitem{Li Xiaolong and Wang Kui}
Li, X. L., and K. Wang.
`` Parabolic frequency monotonicity on compact manifolds."
{\it Calc. Var. Partial Differential Equations}
58, no. 6 (2019): 18, Paper No. 189.

\bibitem{Li Xiaolong and Zhang Qi}
Li, X. L., and Q. S. Zhang.
`` Matrix Li-Yau-Hamilton estimates under Ricci flow and parabolic frequency."
{\it Calc. Var. Partial Differential Equations}
63, no. 3 (2024): 38, Paper No. 63.

\bibitem{Li Xilun}
Li, X. L.
`` Entropy and heat kernel on generalized Ricci flow."
{\it J. Geom. Anal.}
34, no. 2 (2024): 23, Paper No. 42.

\bibitem{Lin Fanghua}
Lin, F.-H.
`` Nodal sets of solutions of elliptic and parabolic equations."
{\it Comm. Pure Appl. Math.}
44, no. 3 (1991): 287-308.

\bibitem{Liu Hao Yue-frequency}
Liu, H.-Y., and P. Xu.
``A note on parabolic frequency and a theorem of Hardy-P\'{o}lya-Szeg\"{o}."
{\it Internat. J. Math.}
33, no. 9 (2022): 15, Paper No. 2250064.

\bibitem{liu shiping}
Liu, S. P.
``Gradient estimates for solutions of the heat equation under Ricci flow."
{\it Pacific J. Math.}
243, no. 1 (2009): 165-180.

\bibitem{Moser 1964}
Moser, J.
``A Harnack inequality for parabolic differential equations."
{\it Comm. Pure Appl. Math.}
17, (1964): 101-134.

\bibitem{Ni Lei}
Ni, L.
{\it Parabolic frequency monotonicity and a theorem of Hardy-P\'{o}lya-Szeg\"{o}. In: Analysis, complexgeometry, and mathematical physics: in honor of Duong H. Phong}
Contemp. Math.,
American Mathematical Society, Providence, RI, 644, (2015): 203-210.

\bibitem{Oliynyk-math physics}
Oliynyk, T., V. Suneeta, and E. Woolgar.
``A gradient flow for worldsheet nonlinear sigma models."
{\it Nuclear Phys.}
B 739, no. 3 (2006): 441-458.

\bibitem{Poon}
Poon, C.-C.
``Unique continuation for parabolic equations."
{\it Comm. Partial Differential Equations}
21, no. 3-4 (1996): 521-539.

\bibitem{Streets-generalized geometry}
Streets, J.
``Generalized geometry, T-duality, and renormalization group flow."
{\it J. Geom. Phys.}
114, (2017): 506-522.

\bibitem{Streets-scalar and entropy}
Streets, J.
``Scalar curvature, entropy, and generalized Ricci flow."
{\it Int. Math. Res. Not. IMRN}
no. 11 (2023): 9481-9510.

\bibitem{Tian Gang-complex geometry}
Streets, J., and G. Tian.
`` A parabolic flow of pluriclosed metrics."
{\it Int. Math. Res. Not. IMRN}
no. 16 (2010): 3101-3133.

\bibitem{Tian Gang-generalized geometry}
Streets, J., and G. Tian.
`` Generalized K\"{a}hler geometry and the pluriclosed flow."
{\it Nuclear Phys.}
B 858, no. 2 (2012): 366-376.

\bibitem{Tian Gang and Streets-complex geometry}
Streets, J., and G. Tian.
`` Regularity results for pluriclosed flow."
{\it Geom. Topol.}
17, no. 4 (2013): 2389-2429.

\bibitem{Zhang-Hamilton estimate}
Zhang, Q. S.
``Some gradient estimates for the heat equation on domains and for an equation by Perelman."
{\it Int. Math. Res. Not.}
(2006): 39, Art. ID 92314.


\end{thebibliography}
\end{document}